\def\TeXHOME{.}
\def\ol{\overline}
\def\scrL{\mathscr{L}}
\newcommand\typ[2][R]{\textbf{#1#2}}
\def\ii{\mathrm{i}}
\DeclareMathOperator{\pinv}{+}
\DeclareMathOperator{\nullspace}{\mathcal{N}}
\DeclareMathOperator{\vectorize}{vec}
\title{Relative Perturbation Theory for Quadratic Hermitian Eigenvalue Problem}
\author{
Peter Benner\thanks{
Max Planck Institute for Dynamics of Complex Technical Systems,
Sandtorstra{\ss}e 1, 39106 Magdeburg, Germany.
E-mail: {\tt benner@mpi-magdeburg.mpg.de}.
}
\and
Xin Liang\thanks{
Max Planck Institute for Dynamics of Complex Technical Systems,
Sandtorstra{\ss}e 1, 39106 Magdeburg, Germany.
E-mail: {\tt liangxinslm@tsinghua.edu.cn}.
}
\and
Suzana Miodragovi{\'c}\thanks{
Department of Mathematics, University of Osijek, Trg Ljudevita Gala 6,
31000 Osijek, Croatia.
E-mail: {\tt ssusic@mathos.hr}.
}
\and
Ninoslav Truhar\thanks{
Department of Mathematics, University of Osijek, Trg Ljudevita Gala 6,
31000 Osijek, Croatia.
E-mail: {\tt ntruhar@mathos.hr}.
}
}
\date{\today}
\begin{document}

\maketitle

\begin{abstract}

In this paper, we derive new relative perturbation bounds for eigenvectors and eigenvalues for regular quadratic eigenvalue problems of the form
$(\lambda^2 M + \lambda C  + K)x = 0$,
where $M$ and $K$ are nonsingular Hermitian matrices and $C$ is a general Hermitian matrix.
These results are based on new relative perturbation bounds for an equivalent regular Hermitian matrix pair $A-\lambda B$.
The new bounds can be applied to quadratic eigenvalue problems appearing in many relevant applications,
such as mechanical models with indefinite damping.
The quality of our bounds is demonstrated by several numerical experiments.
\end{abstract}

\smallskip
{\bf AMS subject classifications}. 15A18, 15A42, 65F15, 65F35, 65G99

\smallskip
{\bf Keywords.} relative perturbation theory, quadratic eigenvalue problem, Hermitian quadratic matrix polynomial, Hermitian matrix pair

\section{Introduction}
The quadratic eigenvalue problem (QEP) is to find scalars $\lambda$ and nonzero vectors $x$ satisfying
\begin{equation}\label{intro:QEP}
	(\lambda^2 M + \lambda C  + K) x=0,
\end{equation}
where $M$, $C$ and $K$ are $n\times n$ complex matrices. %A major
%algebraic difference between the QEP and the standard (and also
%generalized) eigenvalue problem is that the QEP \eqref{intro:QEP} has $2n$ eigenvalues
%with up to $2n$ eigenvectors, and if there are more than $n$ vectors,
%they obviously cannot form a linearly independent set in
%$\mathbb{C}^n$.
The solution to the QEP is required in many
applications arising in the dynamic analysis of structural mechanical
and acoustic systems, in electronic circuit simulation, in fluid
mechanics, in modeling micro-electronic-mechanical systems (MEMS), and so
on. The number of applications of the QEP is constantly growing.
In \cite{glr:82,lanc:66}, the theoretical
background of the QEP and other polynomial eigenvalue problems is extensively
studied; in \cite{TissMeer2001} a survey of
applications, mathematical properties, and a variety of numerical
solution techniques for the QEP is given.
%More about its application, its mathematical properties, and variety
%of numerical solution techniques are given in
%\cite{TissMeer2001}. Gohberg, Lancaster
%and Rodman \cite{glr:82} and Lancaster \cite{lanc:66}  provided an
%extensive theoretical background on QEP and other polynomial
%eigenvalue problems.

This paper aims to build relative perturbation bounds for
eigenvalues and eigenspaces of the QEP \eqref{intro:QEP}, where
$M$, $C$, and $K$ are allowed to be Hermitian indefinite matrices, and
$C$ can be even singular.
% In this paper we will give relative eigenvectors and eigenvalue perturbation bounds for QEP.

In general, the perturbation theory of matrix or operator
eigenvalue problems can be divided into two major branches. One
belongs to the so-called standard or absolute perturbation theory
which can be found in many well-known textbooks, e.g., \cite{Kato1966,
  Godunov, Baumgartel, Bhatia, Parlett, Stewart1990}.
The other, the so-called
relative perturbation theory, has became a very active research
area since the late 1980s. Some important results of the relative perturbation theory can be
found in \cite{DemKah, BarDem, VesDem}. The development of such a kind of
theory goes back to as early as Kahan's technical report \cite{Kahan}
in 1966.

Regarding the perturbation theory for the QEP in a general setting,
standard or absolute perturbation bounds are given, for example, in
\cite{lili:15:hyperbolic, Veselic2011, TissMeer2001}, but to the
authors' knowledge there are no relative perturbation bounds for
eigenvalues and especially for eigenspaces of QEPs where all
three coefficient matrices are Hermitian without any further assumptions.

Some results on relative perturbations of the QEP can be found in
\cite{TruMio}. There one can find bounds for eigenvectors and
eigenvalues of the QEP \eqref{intro:QEP}, where the matrices $M$, $C$ and
$K$ are positive definite Hermitian matrices and the
condition
\[
	(x^HCx)^2 - 4(x^HMx)( x^H K x) > 0,\quad \forall x \in \mathbb{C}^n, x \neq 0
\]
is satisfied,  which means that the
corresponding QEP is hyperbolic.
%For that case in \cite{TruMio} an equivalent linearized pair
%$A-\lambda B$ has been definite, for which ahse been
The hyperbolic QEP is investigated, e.g., in \cite{TissMeer2001, lili:15:hyperbolic}.

In this paper we will derive similar bounds for the more general case,
that is, the QEP will be regular and $C$ can be any Hermitian matrix.  %does not have to be hyperbolic, and, moreover, we allow that
%the matrices $M$ and $K$ are Hermitian nonsingular and $C$ is any
%Hermitian matrix.

%In this paper we will derive similar bounds for the more general case,
%that is, the QEP does not have to be hyperbolic, and, moreover, we allow that
%the matrices $M$ and $K$ are Hermitian nonsingular and $C$ is any
%Hermitian matrix.

Such QEPs are interesting because many of the instabilities in mechanical systems are caused by a negative definite or indefinite damping matrix $C$, see for example \cite{petit:2012, model, mahdi:2009, Kao2000,kleim:2009, Damm:2011}. Therefore, we will illustrate our
results using numerical experiments motivated by these problems.

%Such QEPs arise in many applications. For example, in
%\cite{petit:2012}, a negative definite matrix $C$ is used to describe
%an increase of the robotic system energy, and brake-squeal is based on the loss of
%stability of the brake-system which is caused by a negative definite
%damping matrix $C$, see for example \cite{model, mahdi:2009}. Also, in
%\cite{Kao2000,kleim:2009, Damm:2011}, the authors have considered
%stabilization of mechanical systems with instabilities caused by
%indefinite damping matrices $C$. Therefore, we will illustrate our
%results using the numerical experiments motivated by these problems.
%In
%several cases the process of
%subtracting the transmission
%simulator has introduced an
%indefinite mass matrix for the
%experimental substructure, see

Perturbation bounds for the regular QEP will be derived by using linearization and perturbation bounds for any regular matrix pair
 $A-\lambda B$, where $A$ and $B$ are Hermitian matrices, given in Section~\ref{Sec3}.

%Our approach to the perturbations of the QEP will be based on a proper
%linearization and a construction of the appropriate relative
%perturbation bounds for the obtained regular matrix pair.
%% Equivalent linearized eigenvalue problem can be chosen to be also regular and Hermitian.
%First, we will consider Hermitian regular matrix pairs $A-\lambda B$
%with $A$ and $B$ Hermitian and derive corresponding relative
%perturbation bounds for its eigenvalues and eigenspaces. Then, we will use
%these bounds to derive desirable eigenvalue and eigenvector bounds for the
%QEP \eqref{intro:QEP}.

%Our approach to the perturbations of the QEP will be based on a proper
%linearization and a construction of the appropriate relative
%perturbation bounds for the obtained regular matrix pair.
%% Equivalent linearized eigenvalue problem can be chosen to be also regular and Hermitian.
%First, we will consider Hermitian regular matrix pairs $A-\lambda B$
%with $A$ and $B$ Hermitian and derive corresponding relative
%perturbation bounds for its eigenvalues and eigenspaces. Then, we will use
%these bounds to derive desirable eigenvalue and eigenvector bounds for the
%QEP \eqref{intro:QEP}.
% Bounds for hyperbolic QEP are dependent only on matrices $M$, $C$ and $K$, their perturbations and relative gaps in eigenvalues.

There is a vast amount of material in perturbation theory
(covering absolute and relative perturbation results) for definite
matrix pairs. Here, we will list some of those results related to
eigenspace perturbations, where the distance between two
eigensubspaces is measured by some trigonometric function of
the angle operator $\Theta$ associated with the eigensubspaces
$\mathcal X=\subspan(X)$ and $\mathcal{Y}=\subspan(Y)$ of the unperturbed and
perturbed matrix pairs. The angle operator $\Theta$ is defined
by
\[
	\Theta(\mathcal{X}, \mathcal{Y}) = \arcsin(P_{\mathcal{X}} -
P_{\mathcal{Y}}),
\]
where $P_{\mathcal{X}}$ and $P_{\mathcal{Y}}$
are orthogonal projections onto the subspaces $\mathcal X$ and
$\mathcal{Y}$, of the same dimension (see, e.g., \cite{Stewart1990}).
The eigenvalues of the matrix $\Theta$ represent canonical angles.
One can find results for standard perturbation theory and
relative perturbation bounds for eigenspaces in
\cite{Stewart1990, Davis1970} and in \cite{Grubivsi'c2010, Grub2012,
  TruMio}, respectively.
For the case when $B$ is Hermitian positive definite, in
\cite{Grubivsi'c2010, Grub2012}, the authors define the angle operator $\Theta$
in the matrix-dependent scalar product $\langle x,y \rangle_B=y^T B
x$, for $x,y \in \mathbb{R}^n$. %and measure the distance between
%the subspaces spanned by perturbed and original eigenvectors.
A similar approach is not possible for the case when $B$ is Hermitian indefinite nonsingular.
%For the case that the matrix $B$ is Hermitian indefinite nonsingular
%and that the matrix pair $(A, B)$ is definite, it is not possible to use this
%approach.
For that case, as it has been shown in \cite{TruMio},  it is
possible to measure the distance between two subspaces, using the fact
that $\|\sin \Theta(\mathcal{X}, \mathcal{Y})\|\rightarrow 0$ if and
only if $\| Y^H B X\| \rightarrow 0$. The same idea will be used in this paper.

This paper is organized as follows.
%Section~\ref{Sec1} gives some basic properties of matrix pairs that
%we will use as a main tool in our approach. In Section~\ref{Sec2} we
%discuss bound for the solution of the structured Sylvester equations
%that we are using to derive our bounds.
In Section~\ref{Sec3} we derive our relative perturbation $\sin
\Theta$-type theorems and a relative bound on the eigenvalues of regular
matrix pairs. We apply these bounds to the QEP and derive relative
perturbation bounds for eigenvectors and eigenvalues in
Section~\ref{Sec4}. Numerical examples are
given in Section~\ref{Sec5} to illustrate our bounds. Finally, some conclusions are summarized
in Section~\ref{Sec6}.

\medskip

\textbf{Notation.} Throughout this paper, we use $\|\cdot\|_2$,
$\|\cdot\|_F$ and $\|\cdot\|_{\UI}$ to denote the spectral matrix
norm, the Frobenius norm and any unitarily invariant matrix norm,
respectively.
We use $\kappa_2(\cdot)$ to denote the spectral condition number.
$I_n$ is the identity matrix of size $n$, and sometimes the subscript is omitted when the size is clear from the context.
If $A$ is a Hermitian positive (semi-)definite matrix, we will write
$A\succ0$ ($A\succeq 0$), and similarly $A\prec0$ ($A\preceq0$) for a
  Hermitian negative (semi-)definite $A$. $A^+$ is the Moore-Penrose pseudo-inverse
of $A$.
We use the standard MATLAB\textsuperscript{\textregistered} notation $A(:,i)$ for the $i$-th
column of the matrix $A$.
Also, we denote by $F_m$ and $G_m$, respectively,  the $m\times m$ matrices given by
\[
F_m =
\begin{bmatrix}
	  &         &         & 1 \\
	  &         & \iddots &   \\
	  & \iddots &         &   \\
	1 &         &         &
\end{bmatrix}
,
G_m =
\begin{bmatrix}
	  &         & 1       & 0 \\
	  & \iddots & \iddots &   \\
	1 & \iddots &         &   \\
	0 &         &         &
\end{bmatrix}
.
\]
Then the Jordan block of size $m$ corresponding to the eigenvalue $\lambda$ is
\[
J_m(\lambda) := \lambda I_m + G_mF_m=
\begin{bmatrix}
	\lambda & 1      &        &         \\
	        & \ddots & \ddots &         \\
	        &	     & \ddots & 1       \\
	        &	     &        & \lambda
\end{bmatrix}
.
\]

%and then the flip Jordan block by \[
%J^F_m(\lambda) :=J_m(\lambda)F_m=
%\begin{bmatrix}
%			&	      & 1       & \lambda \\
%			& \iddots & \iddots &         \\
%	1       & \iddots &         &         \\
%	\lambda &         &         &
%\end{bmatrix}
%,
%\]

A matrix pair $A -\lambda B$ is sometimes denoted by $(A,B)$.
If $A_1,B_1$ have the same size and so do $A_2,B_2$, then the matrix pair $
\begin{bmatrix}
	A_1 & \\ & A_2
\end{bmatrix}
- \lambda
\begin{bmatrix}
	B_1 & \\ & B_2
\end{bmatrix}
$ is also written as
$ (A_1 -\lambda  B_1)\oplus (A_2 -\lambda  B_2) $ or $ (A_1,B_1)\oplus(A_2,B_2)$.
For any matrices $W,V$ of appropriate sizes, $W^H(A,B)V$ is used to denote the pair $(W^HAV,W^HBV)$.

\section{Canonical forms for matrix pairs}
As described in the introduction, we will derive perturbation bounds for regular Hermitian matrix pairs $(A,B)$, and we will use those bounds to derive bounds for the QEP. Thus, in this section, we recall a canonical form of Hermitian matrix pairs in Lemma~\ref{lm:canonical-form}, which will play a fundamental role in the discussion of eigenspaces.
%\section{Canonical forms for matrix pairs}\label{Sec1}
\begin{lemma}[{\cite[Theorem~5.10.1]{golr:05}, \cite[Theorem~6.1]{laro:05}}]
	Every Hermitian matrix pair $(A,B)$ is congruent to a Hermitian matrix pair of the form
	\begin{multline}
			(0,0) \oplus \bigoplus_{j=1}^p (G_{2\varepsilon_j+1},\begin{bmatrix} &&F_{\varepsilon_j}\\ &0&\\F_{\varepsilon_j}&&\end{bmatrix})
				\\
				\oplus \bigoplus_{j=1}^r \delta_j(F_{k_j},G_{k_j})
				\oplus \bigoplus_{j=1}^q \eta_j(\alpha_jF_{\ell_j}+G_{\ell_j},F_{\ell_j})
				\\
				\oplus \bigoplus_{j=1}^s (\begin{bmatrix} &\beta_jF_{m_j}+G_{m_j}\\ \ol{\beta_j}F_{m_j}+G_{m_j}&\end{bmatrix},F_{2m_j})
					.
		\label{eq:lm:canonical-form}
	\end{multline}
	Here, $\varepsilon_1\le\dots\le\varepsilon_p$ and
        $k_1\le\dots\le k_p$, $l_1\le \dots \le l_q$ and $m_1\le \dots\le m_s$ are positive integers,  $\alpha_j$ are
        real numbers,  $\beta_j$ are complex non-real numbers, and
        $\delta_j,\eta_j$ are signs ($+1$ or $-1$).
	The form is uniquely determined by $(A,B)$ up to a combination
        of permutations of the following types of blocks:
	\begin{description}
		\item[T0.] $ (0,0)$;
		\item[T1.] $ (G_{2\varepsilon_j+1},\begin{bmatrix}  &&F_{\varepsilon_j}\\ &0&\\F_{\varepsilon_j}&&\end{bmatrix}) , \quad j=1,\dots, p$;
		\item[T2.] $ \delta_j(F_{k_j},G_{k_j}), \quad j=1,\dots, r$;
		\item[T3.] $ \eta_j(\alpha_jF_{\ell_j}+G_{\ell_j},F_{\ell_j}), \quad j=1,\dots, q$;
		\item[T4.] $ (\begin{bmatrix}
                    &\beta_jF_{m_j}+G_{m_j}\\
                    \ol{\beta_j}F_{m_j}+G_{m_j}&\end{bmatrix},F_{2m_j}),
                  \quad j=1,\dots, s$, with possible replacement of
                  $\beta_j$ by $\ol{\beta_j}$.
%\marginpar{Some , are missing. Noticing $\rightarrow$ Then it is easy to note that...and the}
	\end{description}
	\label{lm:canonical-form}
			%= (
			%\begin{bmatrix}
            %      &         &   &   &         & 1       & 0 \\
            %      &         &   &   & \iddots & \iddots &   \\
            %      &         &   & 1 & 0       &         &   \\
            %      &         & 1 & 0 &         &         &   \\
            %      & \iddots & 0 &   &         &         &   \\
            %    1 & \iddots &   &   &         &         &   \\
            %    0 &         &   &   &         &         &
            %\end{bmatrix}
            %,
            %\begin{bmatrix}
            %      &         &   &   &   &         & 1 \\
            %      &         &   &   &   & \iddots &   \\
            %      &         &   &   & 1 &         &   \\
            %      &         &   & 0 &   &         &   \\
            %      &         & 1 &   &   &         &   \\
            %      & \iddots &   &   &   &         &   \\
            %    1 &         &   &   &   &         &
			%\end{bmatrix})
\end{lemma}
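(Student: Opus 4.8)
The plan is to obtain the form by superimposing onto the Kronecker canonical form of a general pencil the extra rigidity coming from the Hermitian symmetry $A=A^H$, $B=B^H$, which is preserved under congruence $(A,B)\mapsto(W^HAW,W^HBW)$. The overall strategy is: (i) split off the singular part and identify the blocks \textbf{T0}, \textbf{T1}; (ii) decompose the regular part, as a congruence, according to the spectrum; (iii) read off the signed Jordan structure \textbf{T2}, \textbf{T3} and the paired complex blocks \textbf{T4} from the canonical form of an $H$-self-adjoint matrix; (iv) establish uniqueness.

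First I would treat the singular part. Each dimension of the common kernel $\ker A\cap\ker B$ contributes a $(0,0)$ block, giving \textbf{T0}. For the remaining singular structure the decisive observation is that for a Hermitian pencil the list of right (column) minimal indices coincides with the list of left (row) minimal indices: since $(A-\lambda B)^H=A-\ol\lambda B$, the pencil is self-conjugate, and Hermitian conjugation interchanges the two lists of minimal indices while leaving their values unchanged. Hence each minimal index $\varepsilon_j$ is matched with an equal partner and can be assembled, by congruence, into a single self-paired block of odd size $2\varepsilon_j+1$, which is exactly \textbf{T1}. The delicate point is to realize this over \emph{congruence} rather than mere strict equivalence, which forces me to choose bases of the left and right singular chains that are dual to one another.

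On the complementary regular part I would use that root subspaces belonging to distinct eigenvalues are mutually $A$- and $B$-orthogonal, a direct consequence of the Hermitian form. This makes the regular pencil split, again as a congruence, into a finite-spectrum part (on which $B$ is nonsingular) and an infinite-spectrum part (on which $A$ is nonsingular and $B$ is nilpotent). On the finite part, $B^{-1}A$ is self-adjoint in the indefinite inner product $[x,y]=y^HBx$; I would invoke the canonical form for $H$-self-adjoint matrices (Gohberg--Lancaster--Rodman), which attaches to each real eigenvalue Jordan blocks carrying a sign characteristic $\eta_j=\pm1$ (these become \textbf{T3}), while each non-real eigenvalue $\beta_j$ is forced to appear together with $\ol{\beta_j}$ in self-paired blocks with no free sign (these become \textbf{T4}). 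Applying the same device to $A^{-1}B$ on the infinite-spectrum part produces the signed nilpotent blocks \textbf{T2}.

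Finally I would establish uniqueness up to the listed permutations. The block sizes, the eigenvalues $\alpha_j,\beta_j$, and the partition into Jordan blocks are invariants of strict equivalence, since they are read off from the Weierstrass/Smith data of $A-\lambda B$; only the signs $\delta_j,\eta_j$ need a genuinely congruence-level invariant, and these I would pin down through the signatures of the Hermitian forms $A$ and $B$ restricted to the individual root subspaces (equivalently, via the local sign characteristic of the $H$-self-adjoint pieces). The step I expect to be the main obstacle is precisely this sign bookkeeping together with the congruence treatment of the singular blocks: both require that every reduction be performed by transformations respecting the Hermitian structure, rather than by the more permissive strict-equivalence transformations available in the unstructured Kronecker theory.
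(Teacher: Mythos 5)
The paper offers no proof of this lemma: it is quoted verbatim from the cited sources ({\cite[Theorem~5.10.1]{golr:05}} and {\cite[Theorem~6.1]{laro:05}}), so there is no in-paper argument to compare against. Your outline reproduces the standard proof found in those references --- pairing the left and right minimal indices of the self-conjugate pencil to build the odd-sized singular blocks \textbf{T1}, splitting the regular part congruently along mutually $A$- and $B$-orthogonal root subspaces, invoking the $H$-self-adjoint canonical form with its sign characteristic for \textbf{T2}--\textbf{T4}, and settling uniqueness of the signs via local signatures --- and you correctly identify the two genuinely delicate points (performing the singular reduction under congruence rather than strict equivalence, and the sign bookkeeping), so the sketch is sound and consistent with the cited proofs.
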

%\begin{remark}
%	In \cite{laro:05}:
%	$\varepsilon_j$ are called \emph{minimal indices} of the pair \cite[subsection XII.5.2]{gant:59};
%	$k_j$ are called \emph{indices at infinity} of the pair;
%	the sequence $\{\delta_j,\eta_j\}$ is called \emph{sign characteristic} of the pair;
%	$\alpha_j,\beta_j,\ol{\beta_j}$ are called \emph{eigenvalues} of the pair;
%	$\ell_j,m_j$ are called \emph{indices} of the pair \emph{corresponding to the eigenvalue};
%\end{remark}
Specifically, if $(A,B)$ is a regular pair, then its canonical form
only contains blocks of type \typ[T]{2}, \typ[T]{3}, \typ[T]{4}.
%In this paper, Hermitian matrix pair $(A,B)$ is called
%\emph{Minimal-Indices-Zero (MIZ)}\footnote{The name comes from the
%fact that the minimal indices (defined in \cite[subsection
%XII.5.2]{gant:59}) are all zero.} if its canonical form only contains
%blocks of type \typ[T]{0}, \typ[T]{2}, \typ[T]{3}, \typ[T]{4}.

%Observe the example:
%\begin{example}\label{eg:canonical-form-change}
%	Given matrix $
%	A = \begin{bmatrix}
%		\lambda & 1 \\ & \lambda
%	\end{bmatrix}
%	$, and the perturbed matrix are
%	$
%	\begin{bmatrix}
%		\lambda+\varepsilon & 1+\varepsilon_1 \\ & \lambda+\varepsilon
%	\end{bmatrix}
%	$
%\end{example}
To indicate the similarity of the asymptotic behavior between these
blocks and the corresponding eigenvalue, in the following we apply a congruence transformation:
\begin{enumerate}
	\item
		For blocks of type \typ[T]{3} with $\alpha_j\ne0$,
		write $$T=\diag(|\alpha_j|^{\frac{\ell_j-1}{2}},|\alpha_j|^{\frac{\ell_j-1}{2}-1},\dots,|\alpha_j|^{-\frac{\ell_j-1}{2}}).$$
		Then $T^HF_{\ell_j}T=F_{\ell_j}$ and $T^HG_{\ell_j}T=|\alpha_j|G_{\ell_j}$, so the new block pair is of type $ \eta_j(\alpha_jF_{\ell_j}+|\alpha_j|G_{\ell_j},F_{\ell_j})$.
	\item
		For blocks of type \typ[T]{4},
		write $$T=\diag(|\beta_j|^{\frac{m_j-1}{2}},|\beta_j|^{\frac{m_j-1}{2}-1},\dots,|\beta_j|^{-\frac{m_j-1}{2}}).$$
		Then $T^HF_{m_j}T=F_{m_j}$ and $T^HG_{m_j}T=|\beta_j|G_{m_j}$. Since
		\[
			\begin{bmatrix} T&\\ &T
			\end{bmatrix}^H \begin{bmatrix} &S\\ S^H&
			\end{bmatrix} \begin{bmatrix} T&\\ &T
			\end{bmatrix} = \begin{bmatrix} &T^HST\\ T^HS^HT&
			\end{bmatrix},
		\]
		the new block pair is of type
		\[
			(\begin{bmatrix} &\beta_jF_{m_j}+|\beta_j|G_{m_j}\\ \ol{\beta_j}F_{m_j}+|\ol{\beta_j}|G_{m_j}&\end{bmatrix},F_{2m_j}).
		\]
\end{enumerate}

From this, we obtain the following variant of the canonical form:
\begin{lemma}
	Every regular Hermitian matrix pair $(A,B)$ is congruent to a Hermitian matrix pair of the form
	\begin{multline}
		%(0,0) \oplus \bigoplus_{j=1}^p (G_{2\varepsilon_j+1},\begin{bmatrix} &&F_{\varepsilon_j}\\ &0&\\F_{\varepsilon_j}&&\end{bmatrix})
		%\oplus
		\bigoplus_{j=1}^r \delta_j(F_{k_j},G_{k_j})
		\oplus \bigoplus_{j=1}^{q'} \eta'_j(G_{\ell'_j},F_{\ell'_j})
		\oplus \bigoplus_{j=1}^q \eta_j(\alpha_jF_{\ell_j}+|\alpha_j|G_{\ell_j},F_{\ell_j})
		\\
		\oplus \bigoplus_{j=1}^s (\begin{bmatrix} &\beta_jF_{m_j}+|\beta_j|G_{m_j}\\ \ol{\beta_j}F_{m_j}+|\ol{\beta_j}|G_{m_j}&\end{bmatrix},F_{2m_j})
		.
		\label{eq:lm:modified-canonical-form}
	\end{multline}
	Here, %$\varepsilon_1\le\dots\le\varepsilon_p$ and
	 $k_j$, $l_j$ and $m_j$ are positive integers,  $\alpha_j$ are nonzero real numbers,
        $\beta_j$ are complex non-real numbers, and the
        $\delta_j,\eta'_j,\eta_j$ are signs ($+1$ or $-1$).
	This form is uniquely determined by $(A,B)$ up to a combination
        of permutations of the following types of blocks:
	\begin{description}
		%\item[R0.] $ (0,0)$;
		%\item[T1.] $ (G_{2\varepsilon_j+1},\begin{bmatrix} &&F_{\varepsilon_j}\\ &0&\\F_{\varepsilon_j}&&\end{bmatrix}) , \quad j=1,\dots p$;
		\item[R1.] $ \delta_j(F_{k_j},G_{k_j}), \quad j=1,\dots, r$;
		\item[R2.] $ \eta'_j(G_{\ell'_j},F_{\ell'_j}), \quad j=1,\dots, q'$;
		\item[R3.] $ \eta_j(\alpha_jF_{\ell_j}+|\alpha_j|G_{\ell_j},F_{\ell_j}), \quad j=1,\dots, q$;
		\item[R4.] $ (\begin{bmatrix} &\beta_jF_{m_j}+|\beta_j|G_{m_j}\\ \ol{\beta_j}F_{m_j}+|\ol{\beta_j}|G_{m_j}&\end{bmatrix},F_{2m_j}), \quad j=1,\dots, s$, with possible replacement of $\beta_j$ by $\ol{\beta_j}$.
	\end{description}
	\label{lm:modified-canonical-form}
\end{lemma}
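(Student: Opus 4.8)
The plan is to reduce everything to the canonical form of Lemma~\ref{lm:canonical-form} and then push each surviving block through the explicit congruences already recorded above. First I would invoke regularity: a regular pair has no singular part, so the blocks \typ[T]{0} and \typ[T]{1} cannot occur, and the canonical form of Lemma~\ref{lm:canonical-form} collapses to a direct sum of \typ[T]{2}, \typ[T]{3}, and \typ[T]{4} blocks. The \typ[T]{2} blocks $\delta_j(F_{k_j},G_{k_j})$ are already in the form of \typ[R]{1} blocks, so they require no work.

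Next I would split the \typ[T]{3} blocks $\eta_j(\alpha_jF_{\ell_j}+G_{\ell_j},F_{\ell_j})$ according to whether $\alpha_j$ vanishes. When $\alpha_j=0$ the block is already $\eta_j(G_{\ell_j},F_{\ell_j})$, which, after relabeling $\ell_j$ as $\ell'_j$ and $\eta_j$ as $\eta'_j$, is a \typ[R]{2} block. When $\alpha_j\ne0$ I would apply the diagonal congruence $T=\diag(|\alpha_j|^{\frac{\ell_j-1}{2}},\dots,|\alpha_j|^{-\frac{\ell_j-1}{2}})$ introduced above; since $T$ is real, positive, and invertible, congruence by $T$ preserves both the Hermitian structure and the overall sign $\eta_j$, and the identities $T^HF_{\ell_j}T=F_{\ell_j}$ and $T^HG_{\ell_j}T=|\alpha_j|G_{\ell_j}$ turn the block into the \typ[R]{3} form $\eta_j(\alpha_jF_{\ell_j}+|\alpha_j|G_{\ell_j},F_{\ell_j})$. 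For the \typ[T]{4} blocks I would use the block-diagonal congruence $\diag(T,T)$ with $T=\diag(|\beta_j|^{\frac{m_j-1}{2}},\dots,|\beta_j|^{-\frac{m_j-1}{2}})$, whose action on an off-diagonal Hermitian block is exactly the one displayed above; the same two scaling identities then convert each \typ[T]{4} block into the stated \typ[R]{4} form.

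Finally, assembling these block-wise congruences into a single block-diagonal congruence $S=\bigoplus_j T_j$ gives $S^H(A,B)S$ in the required shape (\ref{eq:lm:modified-canonical-form}), and since a composite of congruences is again a congruence, the resulting pair stays congruent to the original. For uniqueness I would observe that the map from the T-data to the R-data is a bijection on block types: \typ[T]{2} corresponds to \typ[R]{1}, the two subfamilies $\alpha_j=0$ and $\alpha_j\ne0$ of \typ[T]{3} correspond respectively to \typ[R]{2} and \typ[R]{3}, and \typ[T]{4} corresponds to \typ[R]{4}, with each individual scaling reversible and no two distinct block types merged. Hence the uniqueness up to permutation asserted in Lemma~\ref{lm:canonical-form} transfers to the R-form. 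I expect the only delicate point to be bookkeeping rather than computation: checking that the scalings are genuine congruences leaving the attached signs untouched, and confirming that the dichotomy $\alpha_j=0$ versus $\alpha_j\ne0$ partitions the \typ[T]{3} blocks cleanly so that uniqueness survives the passage from the T-form to the R-form. The two scaling identities are already established above, so no new calculation is needed.
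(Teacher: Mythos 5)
Your proposal is correct and follows essentially the same route as the paper: it drops the singular blocks \typ[T]{0} and \typ[T]{1} by regularity, keeps \typ[T]{2} as \typ[R]{1}, splits \typ[T]{3} into the $\alpha_j=0$ and $\alpha_j\ne0$ cases to obtain \typ[R]{2} and \typ[R]{3} via the diagonal scaling $T$, and handles \typ[T]{4} with $\diag(T,T)$, exactly as in the transformations the paper records just before the lemma. The transfer of uniqueness through the bijection of block types is the same implicit argument the paper relies on.
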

%\begin{proof}
%	It is the only thing needed to prove that any block of types \typ[T]{2}, \typ[T]{3}, \typ[T]{4} in \eqref{eq:lm:canonical-form} is congruent to some type of block in \eqref{eq:lm:modified-canonical-form}.
%	For blocks of type  \typ[T]{2} in Lemma~\ref{lm:canonical-form}, they are just type \typ{1}.
%	For blocks of type \typ[T]{3}, if $\alpha_j=0$, it is of type \typ{2};
%	otherwise, writing $T=\diag(|\alpha_j|^{\frac{\ell_j-1}{2}},|\alpha_j|^{\frac{\ell_j-1}{2}-1},\dots,|\alpha_j|^{-\frac{\ell_j-1}{2}})$,
%	then $T^HF_{\ell_j}T=F_{\ell_j}$ and $T^HG_{\ell_j}T=|\alpha_j|G_{\ell_j}$, so the block pair is of type \typ{3}.
%	For blocks of type \typ[T]{4},
%	writing $T=\diag(|\beta_j|^{\frac{m_j-1}{2}},|\beta_j|^{\frac{m_j-1}{2}-1},\dots,|\beta_j|^{-\frac{m_j-1}{2}})$,
%	then $T^HF_{m_j}T=F_{m_j}$ and $T^HG_{m_j}T=|\beta_j|G_{m_j}$;
%	noticing $\begin{bmatrix} T&\\ &T\end{bmatrix}^H\begin{bmatrix} &S\\ S^H&\end{bmatrix}\begin{bmatrix} T&\\ &T\end{bmatrix}=\begin{bmatrix} &T^HST\\ T^HS^HT&\end{bmatrix}$,
%	the block pair is of type \typ{4}.
%\end{proof}

\section{Relative perturbation bound for a Hermitian matrix pair}\label{Sec3}

In this section, first we will see that the bound on the angle between the
eigenspaces of a regular Hermitian matrix pair is related to a bound
on the solutions to \emph{structured Sylvester equations}
$SX-XS'=T$ with coefficients $S,S',T$ of special structure.
%\marginpar{This sentence has to be rewritten. PB: rewritten}
Our approach more or less follows the lines of Davis and Kahan
\cite{Davis1970} and is very closely related to the work of Li
\cite{li:99,li:03}.

\subsection{Bound on the solution of the structured Sylvester equation}

%\section{Relative bound for a structured Sylvester equation}\label{Sec2}
%In our approach we are using special structured Sylvester equations to derive relative perturbation bounds for eigenvectors and also eigenvalues. More about structured Sylvester equations and their connection to the $\sin \Theta$ type theorems one can find in \cite{Li1999a}. In the next lemma we will derive a bound for such equations.
%Next, we consider the structured Sylvester equations.
Besides the references given above, structured Sylvester equations and
their connection to the $\sin \Theta$-type theorems can be found in, e.g., \cite{Li1999a}.
According to our needs, we derive Lemma~\ref{eq:lm:structured-Sylvester}.
%\marginpartiny{To shrink the coefficients in the bounds, the key is to sharp the estimate of $2$-norms for the special Toeplitz matrix.}
\begin{lemma}
	Given two matrices $M,N$ of the same size.
	Suppose $(\Lambda,\Omega)$ and $(\Lambda',\Omega')$ are block
        pairs of the type \typ{2}, \typ{3}, \typ{4} in
        \eqref{eq:lm:modified-canonical-form}.
	Write
	$ \lambda(\Lambda,\Omega)=\{\underbrace{\lambda,\dots,\lambda}_{n}\} \text{\;or\;} \{\underbrace{\lambda,\ol{\lambda},\dots,\lambda,\ol{\lambda}}_{2n}\}$
	and
	$ \lambda(\Lambda',\Omega')=\{\underbrace{\lambda',\dots,\lambda'}_{n'}\} \text{\;or\;} \{\underbrace{\lambda',\ol{\lambda'},\dots,\lambda',\ol{\lambda'}}_{2n'}\}$.
	For any $\alpha>0,m\in\mathbb{N}$, define
	\[
	\varphi_-(\alpha,m):=\alpha^{1-m}\frac{1-\alpha^{m}}{1-\alpha},\quad
	%\le \alpha^{1-m}\frac{1}{1-\alpha}
	\varphi_+(\alpha,m):=\alpha^{-m}\frac{1+\alpha-2\alpha^{m}}{1-\alpha}.
	%\le \alpha^{-m}\frac{1+\alpha}{1-\alpha}
	\]
	If $ \lambda(\Lambda,\Omega)\cap\lambda(\Lambda',\Omega')=\emptyset$,
	then each of the equations
	\begin{subequations}		\label{eq:lm:structured-Sylvester}
		\begin{gather}
			(\Omega'\Lambda')^HY - Y\Omega\Lambda =
                        -(\Omega'\Lambda')^HM + N\Omega\Lambda, \label{eq:lm:structured-Sylvester:cont}
			\\
			Y - (F_{n'}G_{n'})^HY\Omega\Lambda =  - M +
                        (F_{n'}G_{n'})^HN\Omega\Lambda, \label{eq:lm:structured-Sylvester:disc}
		\end{gather}
	\end{subequations}%
	has a unique solution $Y$ whose size is the same as that of $M,N$.
	For \eqref{eq:lm:structured-Sylvester:disc} we define $\lambda'=\infty$.
	Then, for both \eqref{eq:lm:structured-Sylvester:cont} and \eqref{eq:lm:structured-Sylvester:disc},
	the solution $Y$ is bounded by
	\[
	\|Y\|_F\le
	\phi_1(\lambda,\lambda',n,n')\|M\|_F+\phi_2(\lambda,\lambda',n,n')\|N\|_F,
	\]
	where
        \begin{equation}\label{def:alpha}
	\begin{aligned}
		&\phi_1(\lambda,\lambda',n,n') &&=
                \tbinom{n'+n-1}{n}\varphi_-(\gamma,n)\varphi_+(\gamma',n'),&\quad\\
		&\phi_2(\lambda,\lambda',n,n') &&=
                \tbinom{n'+n-1}{n'}\varphi_-(\gamma',n')\varphi_+(\gamma,n),\\
		&\phi_1(0,\lambda',n,n') &&=
                \tbinom{n'+n-1}{n}|\lambda'|^{-1}\varphi_-(|\lambda'|,n-1)+1,&  \\
		&\phi_2(0,\lambda',n,n') &&=
                \tbinom{n'+n-1}{n}|\lambda'|^{-1}\varphi_-(|\lambda'|,n-1), \\
		&\phi_1(\lambda,0,n,n') &&=
                \tbinom{n'+n-1}{n'}|\lambda|^{-1}\varphi_-(|\lambda|,n'-1),&\quad \\
		&\phi_2(\lambda,0,n,n') &&=
                \tbinom{n'+n-1}{n'}|\lambda|^{-1}\varphi_-(|\lambda|,n'-1)+1,\\
		&\phi_1(\lambda,\infty,n,n') &&=
                2|\lambda|\varphi_-(\frac{1}{2|\lambda|},n'-1)+1,& \\
		&\phi_2(\lambda,\infty,n,n') &&=
                2|\lambda|\varphi_-(\frac{1}{2|\lambda|},n'-1), \\
		&\phi_1(0,\infty,n,n') &&= \min\{n,n'\},&\quad \\
		&\phi_2(0,\infty,n,n') &&= \min\{n,n'\}-1,
	\end{aligned}
        \end{equation}
		in which
 $\lambda,\lambda'\notin\{0,\infty\}$, and
	$\gamma := \min\{\left|\frac{\lambda'-\lambda}{\lambda}\right|,\left|\frac{\ol{\lambda'}-\lambda}{\lambda}\right|\},\gamma' := \min\{\left|\frac{\lambda'-\lambda}{\lambda'}\right|,\left|\frac{\ol{\lambda'}-\lambda}{\lambda'}\right|\}$.
%	Note that in \eqref{def:alpha}, $\lambda,\lambda'\notin\{0,\infty\}$ implicitly.
	%\begin{enumerate}
		%\item for \eqref{eq:lm:structured-Sylvester:cont} with $\lambda\ne0,\lambda'\ne0$:
			%\[
			%\|Y\|_F\le
			 %\tbinom{n'+n-1}{n}\varphi_-(\gamma,n)\varphi_+(\gamma',n')\|M\|_F+\tbinom{n'+n-1}{n'}\varphi_+(\gamma,n)\varphi_-(\gamma',n')\|N\|_F,
			%\]
			%where $\gamma := \min\{\left|\frac{\lambda'-\lambda}{\lambda}\right|,\left|\frac{\ol{\lambda'}-\lambda}{\lambda}\right|\},\gamma' := \min\{\left|\frac{\lambda'-\lambda}{\lambda'}\right|,\left|\frac{\ol{\lambda'}-\lambda}{\lambda'}\right|\}$;
		%\item for \eqref{eq:lm:structured-Sylvester:cont} with $\lambda=0$:
			%\[
			%\|Y\|_F\le
			 %\tbinom{n'+n-1}{n}[\varphi_-(\gamma',n)\|M\|_F+(\varphi_-(\gamma',n)-1)\|N\|_F],
			%\]
			%where $\gamma' := |\lambda'|$;
		%\item for \eqref{eq:lm:structured-Sylvester:cont} with $\lambda'=0$:
			%\[
			%\|Y\|_F\le
			 %\tbinom{n'+n-1}{n'}[\varphi_-(\gamma,n')\|M\|_F+(\varphi_-(\gamma,n')-1)\|N\|_F],
			%\]
			%where $\gamma := |\lambda|$;
		%\item for \eqref{eq:lm:structured-Sylvester:disc} with $\lambda\ne0$:
			%\[
			%\|Y\|_F\le
			%\varphi_-(\gamma,n')\|M\|_F+(\varphi_-(\gamma,n')-1)\|N\|_F,
			%\]
			%where $\gamma := \frac{1}{2|\lambda|}$;
		%\item for \eqref{eq:lm:structured-Sylvester:disc} with $\lambda=0$:
			%\[
			%\|Y\|_F\le
			%\min\{n,n'\}\|M\|_F+(\min\{n,n'\}-1)\|N\|_F.
			%\]
	%\end{enumerate}
	\label{lm:structured-Sylvester}
\end{lemma}
The proof of Lemma~\ref{lm:structured-Sylvester} is long, quite
technical, and provides no further insight, so we defer it to \ref{sec:proof-lm:structured-Sylvester}.
\bigskip

\subsection{The main result}

In the following, we will present the relative perturbation theory for
Hermitian matrix pairs, using the above results. To derive our bounds we need to specify what assumptions we will use within this paper.\\
We consider two Hermitian matrix pairs $(A,B)$ and $(\wtd A,\wtd B)$,
and two nonsingular matrices $X=\begin{bmatrix} X_1 &
  X_2 \end{bmatrix}$ and $\wtd X=\begin{bmatrix} \wtd X_1 & \wtd
  X_2 \end{bmatrix}$ for which the following assumptions hold:
\begin{enumerate}[({A}1)]
	\item Both matrix pairs are regular.
	\item $X,\wtd X$ are the matrices to canonicalize the unperturbed and perturbed matrix pairs with special partitions. In detail, they satisfy the following statements:
		\begin{enumerate}
			\item $X^H(A,B)X=(\Lambda,\Omega) = (\Lambda_1,\Omega_1)\oplus(\Lambda_2,\Omega_2)$, or equivalently, $X_i^HAX_i=\Lambda_i, X_i^HBX_i=\Omega_i, i=1,2$ and $X_1^HAX_2=X_1^HBX_2=0$, where $(\Lambda_i,\Omega_i), i=1,2$ are of the form \eqref{eq:lm:modified-canonical-form}, namely
		\begin{equation}\label{eq:blockdetail}
			 (\Lambda_i,\Omega_i)=\bigoplus_{j_i=1}^{m_i}(\Lambda_{i,j_i},\Omega_{i,j_i}), i=1,2,
		\end{equation}
		where $(\Lambda_{i,j_i},\Omega_{i,j_i})$ are blocks of size $n_{i,j_i}$ as in \eqref{eq:lm:modified-canonical-form} with corresponding eigenvalue $\lambda_{i,j_i}$.
			\item Similarly,  $\wtd X^H(\wtd A,\wtd B)\wtd X=(\wtd \Lambda,\wtd \Omega) = (\wtd \Lambda_1,\wtd \Omega_1)\oplus(\wtd \Lambda_2,\wtd \Omega_2)$, or equivalently, $\wtd X_i^H\wtd A\wtd X_i=\wtd \Lambda_i, \wtd X_i^H\wtd B\wtd X_i=\wtd \Omega_i, i=1,2$ and $\wtd X_1^H\wtd A\wtd X_2=\wtd X_1^H\wtd B\wtd X_2=0$,
				where $(\wtd \Lambda_i,\wtd \Omega_i), i=1,2$ are of the form \eqref{eq:lm:modified-canonical-form}, namely
		\begin{equation}\label{eq:blockdetail:wtd}
			(\wtd \Lambda_i,\wtd \Omega_i)=\bigoplus_{\wtd j_i=1}^{\wtd m_i}(\wtd \Lambda_{i,\wtd j_i},\wtd \Omega_{i,\wtd j_i}), i=1,2,
		\end{equation}
		where $(\wtd \Lambda_{i,\wtd j_i},\wtd \Omega_{i,\wtd j_i})$ are blocks of size $\wtd n_{i,\wtd j_i}$ as in \eqref{eq:lm:modified-canonical-form} with corresponding eigenvalue $\wtd \lambda_{i,j_i}$.
	\item $\Lambda_i,\Omega_i,\wtd \Lambda_i, \wtd \Omega_i$ are of the same size for $i=1,2$, which means that $X_i,\wtd X_i$ are of the same size for $i=1,2$, too. Note that this only means $\sum_{j_i=1}^{m_i} n_{i,j_i}=\sum_{\wtd j_i=1}^{\wtd m_i}\wtd n_{i,\wtd j_i}$ rather than any further assumption on $m_i, n_{i,j_i},\wtd m_i, \wtd n_{i,\wtd j_i}$.
		\end{enumerate}
		%satisfy
		%\begin{align*}
			%X^H(A,B)X &= (\Lambda,\Omega) = (\Lambda_1,\Omega_1)\oplus(\Lambda_2,\Omega_2), \\
			%\wtd X^H(\wtd A,\wtd B)\wtd X &= (\wtd \Lambda,\wtd \Omega) = (\wtd \Lambda_1,\wtd \Omega_1)\oplus(\wtd \Lambda_2,\wtd \Omega_2),
			%X_1^H
		%\end{align*}
		%where $(\Lambda_i,\Omega_i), i=1,2$, and $(\wtd \Lambda_i,\wtd \Omega_i), i=1,2$, are of the form \eqref{eq:lm:modified-canonical-form} and
		%$\Lambda_i,\Omega_i,\wtd \Lambda_i, \wtd \Omega_i$ have the same size for $i=1,2$.
		%In detail, we can write
		%\begin{equation}\label{eq:blockdetail}
			 %(\Lambda_i,\Omega_i)=\bigoplus_{j_i=1}^{m_i}(\Lambda_{i,j_i},\Omega_{i,j_i}),
                         %i=1,2,\quad
			%(\wtd \Lambda_i,\wtd
                        %\Omega_i)=\bigoplus_{j_i=1}^{m_i}(\wtd
                        %\Lambda_{i,j_i},\wtd \Omega_{i,j_i}), i=1,2,
		%\end{equation}
		%where $(\Lambda_{i,j_i},\Omega_{i,j_i})$ are blocks of size
                %$n_{i,j_i}$ as in
                %\eqref{eq:lm:modified-canonical-form} with corresponding eigenvalue $\lambda_{i,j_i}$,
		%and similarly we have $(\wtd \Lambda_{i,j_i},\wtd
                %\Omega_{i,j_i}), \wtd n_{i,j_i}, \wtd \lambda_{i,j_i}$.
	\item It holds that
		\begin{align*}
			\lambda(\Lambda_1,\Omega_1)\cap\lambda(\Lambda_2,\Omega_2)=\emptyset,
			&\quad
			\lambda(\wtd \Lambda_1,\wtd \Omega_1)\cap\lambda(\wtd \Lambda_2,\wtd \Omega_2)=\emptyset,
			\\
			\lambda(\Lambda_1,\Omega_1)\cap\lambda(\wtd \Lambda_2,\wtd \Omega_2)=\emptyset,
			&\quad
			\lambda(\Lambda_2,\Omega_2)\cap\lambda(\wtd \Lambda_1,\wtd \Omega_1)=\emptyset.
		\end{align*}
	\item It holds that
		\[
			\infty\notin\lambda(\Lambda_1,\Omega_1),
			\quad
			0\notin\lambda(\Lambda_2,\Omega_2),
			\quad
			\infty\notin\lambda(\wtd \Lambda_1,\wtd \Omega_1),
			\quad
			0\notin\lambda(\wtd \Lambda_2,\wtd \Omega_2),
		\]
		which implies $\Omega_1,\wtd \Omega_1,\Lambda_2,\wtd \Lambda_2$ are nonsingular.
\end{enumerate}
\begin{remark}\label{rk:assumption}
	Let us say something more to illustrate the practical meaning of such complicated assumptions.
	Clearly, (A1) is necessary.

	By (A2) we partition the whole space into two (eigen-)subspaces $X_1,X_2$, such that $X_1$ makes $(A,B)$ to behave as $(\Lambda_1,\Omega_1)$ and $X_2$ makes $(A,B)$ to behave as $(\Lambda_2,\Omega_2)$.
	Without loss of generality, we may call the former one ``focused'' and the latter one ``ignored''.
	Similarly to this, for perturbed case we partition the whole space into two (eigen-)subspaces $\wtd X_1,\wtd X_2$ such that one makes $(\wtd A,\wtd B)$ to behave as $(\wtd \Lambda_1,\wtd \Omega_1)$ and other makes $(\wtd A,\wtd B)$ to behave as $(\wtd \Lambda_2,\wtd \Omega_2)$.
	We hope to observe the effect of the perturbation, so $X_1,\wtd X_1$ should be related and very close to each other if the perturbation is small enough.
	As a result $\Lambda_1,\wtd\Lambda_1$ are related, and so do $\Omega_1,\wtd \Omega_1$, and also holds for the ignored subspaces.

	(A3) is a mathematical form of the practical requirement: the eigenvalues in the focused subspaces and the ignored subspaces may not be the same or close, otherwise we cannot distinguish the effects from the focused and ignored subspaces.

	(A4) is a technical assumption: we focus the possible eigenvalue $0$ and ignore the possible eigenvalue $\infty$.
	Under these assumption, the purpose of the perturbation analysis is to find a quantitative description how close the eigenvalues and eigenspaces of the unperturbed and perturbed matrix pairs are.

	Here is another implicit technical requirement that we will use later.
	If a complex eigenvalue is focused, then by the blocks of type $\typ{4}$, its conjugate, whish is also an eigenvalue, is focused, too.
	It is quite possible that the related eigenvalue of the perturbed matrix pair is also complex.
	We would have to measure how close two conjugate-closed complex number pairs are, and a natural way is to measure the complex unperturbed and perturbed eigenvalues with non-negative (or non-positive) imaginary parts.
\end{remark}
Now we can state the central theorem of this paper, namely Theorem~\ref{thm:main_thm}.
%\begin{theorem}\label{thm:main_thm}
%	Given a regular Hermitian matrix pair $(A,B)$ and its corresponding perturbed pair $(\wtd A,\wtd B)$. Under the assumptions (A1)--(A4),
%	%\begin{equation}
%	%	\|\sin\Theta(\subspan(X_1),\subspan(\wtd X_1))\|_F
%	%	\le \mu\|X\|_2\|\wtd X^{-1}\|_2
%	%	\sqrt{\|\wtd X\|^2_2\|X^{-1}\|^2_2\|A^{\pinv}\delta A\|_F^2+ \|X\|^2_2\|\wtd X^{-1}\|^2_2\|\wtd B^{\pinv}\delta B\|_F^2};
%	%	\label{eq:thm:subspace-relative-bound-(A,B):regular}
%	%\end{equation}
%	%moreover, if $B$ is nonsingular,
%	%\begin{equation}
%	%	\|\sin\Theta(\subspan(X_1),\subspan(\wtd X_1))\|_F
%	%	\le \mu\kappa_2(X)\kappa_2(\wtd X)\sqrt{\|A^{\pinv}\delta A\|_F^2+ \|B^{-1}\delta B\|_F^2},
%	%	\label{eq:thm:subspace-relative-bound-(A,B):regular:Bnonsingular}
%	%\end{equation}
%	%where $\mu=\sqrt{\max\phi_1^2+\max\phi_2^2}$.
%	\marginpartiny{it is necessary to have an elegant form of $\phi_1,\phi_2$}
%	%Also,
%	\begin{equation}
%		\|\sin\Theta(\subspan(X_1),\subspan(\wtd X_1))\|_F
%		\le
%		\max\phi_1 \kappa_2(X)\kappa_2(\wtd X)\|A^{\pinv}\delta A\|_F+\max\phi_2 \|X\|_2^2\|\wtd X^{-1}\|_2^2\|\wtd B^{\pinv}\delta B\|_F;
%		\label{eq:thm:subspace-relative-bound-(A,B):regular:linear}
%	\end{equation}
%	moreover, if $B$ is nonsingular,
%	\begin{equation}
%		\|\sin\Theta(\subspan(X_1),\subspan(\wtd X_1))\|_F
%		\le \kappa_2(X)\kappa_2(\wtd X)\left(\max\phi_1 \|A^{\pinv}\delta A\|_F+\max\phi_2 \|B^{-1}\delta B\|_F\right).
%		\label{eq:thm:subspace-relative-bound-(A,B):regular:Bnonsingular:linear}
%	\end{equation}
%	\label{thm:subspace-relative-bound-(A,B):regular}
%\end{theorem}
\begin{theorem}\label{thm:main_thm}
	Let $(A,B)$ and $(\wtd A, \wtd B)=(A+\delta A, B+\delta B)$ be regular
        Hermitian matrix pairs and $X$ and $\wtd X$ be nonsingular matrices.
	If the assumptions (A1)--(A4) hold for the perturbed and unperturbed
        matrix pairs, then we have
	%\begin{equation}
	%	\|\sin\Theta(\subspan(X_1),\subspan(\wtd X_1))\|_F
	%	\le \mu\|X\|_2\|\wtd X^{-1}\|_2
	%	\sqrt{\|\wtd X\|^2_2\|X^{-1}\|^2_2\|A^{\pinv}\delta A\|_F^2+ \|X\|^2_2\|\wtd X^{-1}\|^2_2\|\wtd B^{\pinv}\delta B\|_F^2};
	%	\label{eq:thm:subspace-relative-bound-(A,B):regular}
	%\end{equation}
	%moreover, if $B$ is nonsingular,
	%\begin{equation}
	%	\|\sin\Theta(\subspan(X_1),\subspan(\wtd X_1))\|_F
	%	\le \mu\kappa_2(X)\kappa_2(\wtd X)\sqrt{\|A^{\pinv}\delta A\|_F^2+ \|B^{-1}\delta B\|_F^2},
	%	\label{eq:thm:subspace-relative-bound-(A,B):regular:Bnonsingular}
	%\end{equation}
	%where $\mu=\sqrt{\max\phi_1^2+\max\phi_2^2}$.
	%\marginpartiny{it is necessary to have an elegant form of $\phi_1,\phi_2$}
	%Also,
	\begin{equation}
\begin{aligned}
		\|\sin\Theta(\subspan(X_1),\subspan(\wtd X_1))\|_F
		\le
		\phi_1^{\max} &\kappa_2(X)\kappa_2(\wtd X)\|A^{\pinv}\delta A\|_F\\ &+\phi_2^{\max} \|X\|_2^2\|\wtd X^{-1}\|_2^2\|\wtd B^{\pinv}\delta B\|_F,
\end{aligned}
		\label{eq:thm:subspace-relative-bound-(A,B):regular:linear}
	\end{equation}
where
\begin{equation}\label{def:phi_12^m}
	\phi_1^{\max}:=\max_{j_1,j_2}\phi_1(\wtd\lambda_{1,j_1},\lambda_{2,j_2},\wtd n_{1,j_1},n_{2,j_2}), \quad
	\phi_2^{\max}:=\max_{j_1,j_2}\phi_2(\wtd\lambda_{1,j_1},\lambda_{2,j_2},\wtd n_{1,j_1},n_{2,j_2}),
\end{equation}
	and $\phi_1(\cdot,\cdot,\cdot,\cdot),\phi_2(\cdot,\cdot,\cdot,\cdot)$ %\marginpar{....functions $\phi_1, \phi_2: \mathbb{C}^4\to \mathbb{R}^+?$}
	are defined in \eqref{def:alpha}.
	Moreover, if $B$ is nonsingular,
	\begin{equation}
		\|\sin\Theta(\subspan(X_1),\subspan(\wtd X_1))\|_F
		\le \kappa_2(X)\kappa_2(\wtd X)\left(\phi_1^{\max} \|A^{\pinv}\delta A\|_F+\phi_2^{\max} \|B^{-1}\delta B\|_F\right).
		\label{eq:thm:subspace-relative-bound-(A,B):regular:Bnonsingular:linear}
	\end{equation}
	\label{thm:subspace-relative-bound-(A,B):regular}
\end{theorem}
\begin{proof}
	First, similarly to the discussion in \cite[Lemma~2]{TruMio}, using $QR$ decomposition of the matrices $BX_2$ and $\wtd X_1$ and the fact that $\|\sin \Theta(X_1, \wtd X_1)\|=\|Q_2^*\wtd Q_1\|$, where $Q_2$ and $\wtd Q_1$ form the orthonormal basis for $\subspan(X_2)$ and $\subspan(\wtd X_1)$, we observe that
	\[
	\|\sin\Theta(\subspan(X_1),\subspan(\wtd X_1))\|_F\le
	\|(BX_2)^{\pinv}\|_2\|\wtd X_1^{\pinv}\|_2\| \|X_2^HB\wtd X_1\|_F.
	\]
	From $BX_2 = X^{-H}\begin{bmatrix}0\\ \Omega_2
	\end{bmatrix}$, we get $\|BX_2\|_2\le\|X^{-1}\|_2$ and
        $\|(BX_2)^{\pinv}\|_2=\|\begin{bmatrix} 0 & \Omega_2^H
	\end{bmatrix}^{\pinv}X\|_2\le\|X\|_2$.
	Similarly, $\|\wtd B\wtd X_1\|_2\le\|\wtd X^{-1}\|_2$.

	Then, to bound $\|\sin\Theta(\subspan(X_1),\subspan(\wtd X_1))\|_F$, we
        need to estimate $\|X_2^HB\wtd X_1\|_F$.
	%Write $(\Lambda_i,\Omega_i)=\bigoplus_{j_i=1}^{m_i}(\Lambda_{i,j_i},\Omega_{i,j_i}), i=1,2$, where $(\Lambda_{i,j_i},\Omega_{i,j_i})$ are blocks in \eqref{eq:lm:modified-canonical-form}.
	Using \eqref{eq:blockdetail}, define $L, R$ by:
	\[
	L = L_1\oplus L_2 = \bigoplus_{i=1}^2\bigoplus_{j_i=1}^{m_i}L_{i,j_i},\quad
	R = R_1\oplus R_2 = \bigoplus_{i=1}^2\bigoplus_{j_i=1}^{m_i}R_{i,j_i},
	\]
	where $L_{i,j_i} = \Lambda_{i,j_i}\Omega_{i,j_i},R_{i,j_i} = I$ if the corresponding block pair is of type \typ{1} or $L_{i,j_i} = I, R_{i,j_i} = \Omega_{i,j_i}\Lambda_{i,j_i}$ if not.
	Here the size of $I$ is determined under the condition that $L_{i,j_i}$ and $R_{i,j_i}$ are of the same size.
	Then, it is easy to check that $L$ and $R$ are lower triangular matrices since $ \Lambda_{i,j_i}$ and $\Omega_{i,j_i}$ are matrices from the pairs of the form \eqref{eq:lm:modified-canonical-form}. Also $L_1=I$ by (A4).
	Note that if $L_{i,j_i} = \Lambda_{i,j_i}\Omega_{i,j_i},R_{i,j_i} = I$, when the corresponding block pair is of type \typ{1}, then it is easy to see that $\Lambda_{i,j_i}\Lambda_{i,j_i}=I$ and we have $\Lambda L=\Omega=\Omega R$. Also, in second case when $L_{i,j_i} = I, R_{i,j_i} = \Omega_{i,j_i}\Lambda_{i,j_i}$, it will be that $\Omega_{i,j_i}\Omega_{i,j_i}=I$ and $\Omega R=\Lambda=\Lambda L$. Thus, we can conclude that
	\[
	X^HAXL = \Lambda L = \Omega R = X^HBXR
	\]
	and then %from which it follows that
	\begin{equation}
		AXL=BXR,\quad \text{and similarly}\quad \wtd A\wtd X\wtd L = \wtd B\wtd X\wtd R.
		\label{eq:eigen-structure}
	\end{equation}
	Also, $\wtd L$ and $\wtd R$ are lower triangular matrices and $\wtd L_1=I$.
	Then
	\begin{align*}
		R_2^HX_2^HB\wtd X_1\wtd L_1 &- L_2^HX_2^HB\wtd X_1\wtd R_1\\
		&= R_2^HX_2^HB\wtd X_1\wtd L_1 - L_2^HX_2^H\wtd B\wtd X_1\wtd R_1 + L_2^HX_2^H\delta B\wtd X_1\wtd R_1
		\\&= L_2^HX_2^HA\wtd X_1\wtd L_1 - L_2^HX_2^H\wtd A\wtd X_1\wtd L_1 + L_2^HX_2^H\delta B\wtd X_1\wtd R_1
		\\&=  - L_2^HX_2^H\delta A\wtd X_1\wtd L_1 + L_2^HX_2^H\delta B\wtd X_1\wtd R_1.
	\end{align*}
	Note that $AX_2 = X^{-H}\begin{bmatrix}0\\ \Lambda_2
	\end{bmatrix}$.
	Since $\Lambda_2$ is nonsingular, $AX_2$ has full column rank, which
        implies that $\subspan(X_2)\cap\nullspace(A)=\emptyset$, where $\nullspace(A)$
        is the nullspace of $A$.
	Note that $I-A^{\pinv}A$ is the orthogonal projector onto $\nullspace(A)$.
	Thus, $A^{\pinv}AX_2=X_2$, whence
	\begin{equation}
		\begin{aligned}
			R_2^HX_2^HB\wtd X_1\wtd L_1 - L_2^HX_2^HB\wtd X_1\wtd R_1
			&=  - L_2^HX_2^HAA^{\pinv}\delta A\wtd X_1\wtd L_1 +
                        L_2^HX_2^H\delta B\wtd X_1\wtd R_1
			\\&=  - R_2^HX_2^HBA^{\pinv}\delta A\wtd X_1\wtd L_1 +
                        L_2^HX_2^H\delta B\wtd X_1\wtd R_1.
		\end{aligned}
		\label{eq:structured-Sylvester:to-be-estimated}
	\end{equation}
	To estimate the norm of the solution $X_2^HB\wtd X_1$ to the structured Sylvester equation
        \eqref{eq:structured-Sylvester:to-be-estimated},
	we will use the fact that the operator $Y\mapsto R_2^HY\wtd L_1-L_2^HY\wtd R_1$ is linear.
	Hence, if $Y^{(1)}$ is the solution to
	\begin{equation}\label{eq:structured-Sylvester:to-be-estimated:1}
			R_2^HY\wtd L_1 - L_2^HY\wtd R_1
			=  - R_2^HX_2^HBA^{\pinv}\delta A\wtd X_1\wtd L_1,
	\end{equation}
	and $Y^{(2)}$ is the solution to
	\begin{equation}\label{eq:structured-Sylvester:to-be-estimated:2}
			R_2^HY\wtd L_1 - L_2^HY\wtd R_1
			= L_2^HX_2^H\delta B\wtd X_1\wtd R_1,
		\end{equation}
	then $X_2^HB\wtd X_1=Y^{(1)}+Y^{(2)}$ is the solution to \eqref{eq:structured-Sylvester:to-be-estimated}.
	Thus estimating the norms of the two $Y^{(i)}$ and then using
        $\|X_2^HB\wtd X_1\|\le\|Y^{(1)}\|+\|Y^{(2)}\|$ (for any norm) will yield the result.

		For this, first we consider $Y^{(1)}$ and \eqref{eq:structured-Sylvester:to-be-estimated:1}, related to $\delta B=0$ in
        \eqref{eq:structured-Sylvester:to-be-estimated}, i.e.,
	for $j_1=1,\dots,m_1$ and $j_2=1,\dots,m_2$:
	\[
	R_{2,j_2}^HY^{(1)}_{j_2,j_1}\wtd L_{1,j_1} - L_{2,j_2}^HY^{(1)}_{j_2,j_1}\wtd R_{1,j_1}
	=  - R_{2,j_2}^HX_{2,j_2}^HBA^{\pinv}\delta A\wtd X_{1,j_1}\wtd L_{1,j_1}
	%R_{2,j_2}^HX_{2,j_2}^HB\wtd X_{1,j_1}\wtd L_{1,j_1} - L_{2,j_2}^HX_{2,j_2}^HB\wtd X_{1,j_1}\wtd R_{1,j_1}
	%=  - R_{2,j_2}^HX_{2,j_2}^HBA^{\pinv}\delta A\wtd X_{1,j_1}\wtd L_{1,j_1} + L_{2,j_2}^HX_{2,j_2}^H\delta B\wtd X_{1,j_1}\wtd R_{1,j_1}
	.
	\]
	Noticing $\wtd L_1=I$,
	this equation in $Y^{(1)}_{j_2,j_1}$ is of the form \eqref{eq:lm:structured-Sylvester}, i.e., of the form \eqref{eq:lm:structured-Sylvester:cont} or \eqref{eq:lm:structured-Sylvester:disc} depending on the type of corresponding block pairs in \eqref{eq:blockdetail}.
	By Lemma~\ref{lm:structured-Sylvester},
	\[
	\|Y^{(1)}_{j_2,j_1}\|_F \le \phi_1(\wtd \lambda_{1,j_1},\lambda_{2,j_2},\wtd n_{1,j_1},n_{2,j_2})\|X_{2,j_2}^HBA^{\pinv}\delta A\wtd X_{1,j_1}\|_F.
	\]
	%\begin{multline*}
	    %\|X_{2,j_2}^HB\wtd X_{1,j_1}\|_F \le \phi_1(\wtd \lambda_{1,j_1},\lambda_{2,j_2},\wtd n_{1,j_1},n_{2,j_2})\|X_{2,j_2}^HBA^{\pinv}\delta A\wtd X_{1,j_1}\|_F.
		%\\+\phi_2(\wtd \lambda_{1,j_1},\lambda_{2,j_2},\wtd n_{1,j_1},n_{2,j_2})\|X_{2,j_2}^H\delta B\wtd X_{1,j_1}\|_F.
	%\end{multline*}
	%Then
	%\begin{multline*}
		%\|X_{2,j_2}^HB\wtd X_{1,j_1}\|_F^2 \le [\phi_1(\wtd \lambda_{1,j_1},\lambda_{2,j_2},\wtd n_{1,j_1},n_{2,j_2})^2+\phi_2(\wtd \lambda_{1,j_1},\lambda_{2,j_2},\wtd n_{1,j_1},n_{2,j_2})^2]
		%\\\cdot [\|X_{2,j_2}^HBA^{\pinv}\delta A\wtd X_{1,j_1}\|_F^2+ \|X_{2,j_2}^H\delta B\wtd X_{1,j_1}\|_F^2].
	%\end{multline*}
	%Thus
	%\begin{align*}
		%\|X_2^HB\wtd X_1\|_F^2
		%&= \sum_{j_1=1}^{m_1}\sum_{j_2=1}^{m_2}\|X_{2,j_2}^HB\wtd X_{1,j_1}\|_F^2
		%\\&\le \mu^2\sum_{j_1=1}^{m_1}\sum_{j_2=1}^{m_2}[\|X_{2,j_2}^HBA^{\pinv}\delta A\wtd X_{1,j_1}\|_F^2+ \|X_{2,j_2}^H\delta B\wtd X_{1,j_1}\|_F^2]
		%\\&= \mu^2[\|X_2^HBA^{\pinv}\delta A\wtd X_1\|_F^2+ \|X_2^H\delta B\wtd X_1\|_F^2].
		%\\&\le \mu^2[\|BX_2\|_2^2\|\wtd X_1\|_2^2\|A^{\pinv}\delta A\|_F^2+ \|X_2^H\delta B\wtd X_1\|_F^2].
	%\end{align*}
	Then
	\begin{align*}
		\|Y^{(1)}\|_F
		&= \sqrt{\sum_{j_1=1}^{m_1}\sum_{j_2=1}^{m_2}\|Y^{(1)}_{j_2,j_1}\|_F^2}
		\\&\le \max_{j_1,j_2}\phi_1(\wtd \lambda_{1,j_1},\lambda_{2,j_2},\wtd n_{1,j_1},n_{2,j_2})\sqrt{\sum_{j_1=1}^{m_1}\sum_{j_2=1}^{m_2}\|X_{2,j_2}^HBA^{\pinv}\delta A\wtd X_{1,j_1}\|_F^2}
		\\&\le \max_{j_1,j_2}\phi_1(\wtd \lambda_{1,j_1},\lambda_{2,j_2},\wtd n_{1,j_1},n_{2,j_2})\|X_2^HBA^{\pinv}\delta A\wtd X_1\|_F
		\\&\le \max_{j_1,j_2}\phi_1(\wtd \lambda_{1,j_1},\lambda_{2,j_2},\wtd n_{1,j_1},n_{2,j_2})\|BX_2\|_2\|\wtd X_1\|_2\|A^{\pinv}\delta A\|_F
		\\&\le \max_{j_1,j_2}\phi_1(\wtd \lambda_{1,j_1},\lambda_{2,j_2},\wtd n_{1,j_1},n_{2,j_2})\|X^{-1}\|_2\|\wtd X\|_2\|A^{\pinv}\delta A\|_F.
	\end{align*}
	For $Y^{(2)}$, which is solution of \eqref{eq:structured-Sylvester:to-be-estimated:2}, related to $\delta A=0$ in
        \eqref{eq:structured-Sylvester:to-be-estimated}, we obtain
	\[
	\|Y^{(2)}\|_F
	\le \max_{j_1,j_2}\phi_2(\wtd \lambda_{1,j_1},\lambda_{2,j_2},\wtd n_{1,j_1},n_{2,j_2})\|X_2^H\delta B\wtd X_1\|_F.
	\]

	In case where $B$ is nonsingular,
	\[
	\|Y^{(2)}\|_F
	\le \max_{j_1,j_2}\phi_2(\wtd \lambda_{1,j_1},\lambda_{2,j_2},\wtd n_{1,j_1},n_{2,j_2})\|X^{-1}\|_2\|\wtd X\|_2\|B^{-1}\delta B\|_F.
	\]
	Thus \eqref{eq:thm:subspace-relative-bound-(A,B):regular:Bnonsingular:linear} holds.
	%\begin{equation*}
		%\|\sin\Theta(\subspan(X_1),\subspan(\wtd X_1))\|_F
		%\le \mu\kappa_2(X)\kappa_2(\wtd X)\sqrt{\|A^{\pinv}\delta A\|_F^2+ \|B^{\pinv}\delta B\|_F^2}.
	%\end{equation*}

	For the general case,
	since $\wtd \Omega_1$ is nonsingular, $\wtd B^{\pinv}\wtd B\wtd X_1=\wtd X_1$.
	Then
	\[
	\|Y^{(2)}\|_F
	\le \max_{j_1,j_2}\phi_2(\wtd \lambda_{1,j_1},\lambda_{2,j_2},\wtd n_{1,j_1},n_{2,j_2})\|X\|_2\|\wtd X^{-1}\|_2\|\wtd B^{\pinv}\delta B\|_F,
	\]
	so \eqref{eq:thm:subspace-relative-bound-(A,B):regular:linear} holds.
	%\begin{equation*}
		%\|\sin\Theta(\subspan(X_1),\subspan(\wtd X_1))\|_F
		%\le \mu\|X\|_2\|\wtd X^{-1}\|_2
		%\sqrt{\|\wtd X\|^2_2\|X^{-1}\|^2_2\|A^{\pinv}\delta A\|_F^2+ \|X\|^2_2\|\wtd X^{-1}\|^2_2\|\wtd B^{\pinv}\delta B\|_F^2}.
	%\end{equation*}
\end{proof}
Let us illustrate the quality of bounds \eqref{eq:thm:subspace-relative-bound-(A,B):regular:linear} and \eqref{eq:thm:subspace-relative-bound-(A,B):regular:Bnonsingular:linear} with the following small example.
\begin{example}
Consider a regular matrix pair $(A,B)$, where
\begin{eqnarray*} A=\left[\begin{matrix} 0 &8 &\varrho &2\\ 8 & 0 & 2& 0\\ \varrho & 2 & 0 &8 \\ 2& 0& 8&0\end{matrix}\right] \quad  \textrm{and} \quad B=\left[\begin{matrix} 0 & 0&0&1\\ 0 & 0 & 1& 0\\ 0 & 1 & 0 &0 \\ 1& 0& 0&0\end{matrix}\right],\end{eqnarray*}
where $|\varrho| \le 1$. The corresponding perturbed pair $(\wtd A, \wtd B)$ is given as $\wtd A= A+ \xi \delta A$ and $\wtd B=B+\xi \delta B$, where $\xi=10^{-8}$ and $\delta A$, $\delta B$ are random Hermitian matrices with elements uniformly distributed in interval $(0,1)$.  We will analyze the quality of the bound \eqref{thm:subspace-relative-bound-(A,B):regular} for a different choices of the value $\varrho$ repeated a $100$ times. The maximal and the minimal values of the quantities which appear in the bound  \eqref{thm:subspace-relative-bound-(A,B):regular} for $100$ generated perturbations $(\delta A, \delta B)$ are given in the Table \ref{table}.

\begin{table}[!ht]
{\tiny
\begin{tabular}{|l|c|c|c|c|l|l|l|}
\hline
\multicolumn{1}{|c|}{$\varrho$}& \multicolumn{2}{c|}{Exact $\|\sin\Theta\|_F$ } &\multicolumn{2}{c|}{Bound \eqref{thm:subspace-relative-bound-(A,B):regular}}& \multicolumn{1}{c|}{$\kappa_2(X)$}& \multicolumn{2}{c|}{$\kappa_2(\wtd X)$}\\
\cline{2-8}
 & min & max & min & max &  & min & max\\
 \hline
 $0$& $5.4812\cdot 10^{-10}$&   $2.8099\cdot10^{-9}$&  $5.4311\cdot 10^{-8}$ & $ 2.0468\cdot 10^ {-6}$& $1$&$1.0766$&$16.770$ \\
 $10^{-14}$ & $1.5450\cdot 10^{-10}$& $2.7125\cdot 10^{-9}$ & $4.0485\cdot 10^{-7}$& $6.5515\cdot 10^{-6}$ & $9.7053$&$1.0928$&$13.687$\\
 $1$&  $6.2533 \cdot 10^{-10} $ & $ 3.0239\cdot 10^{-9}$ & $3.1436\cdot 10^{-4}$ & $8.6771 \cdot 10^{-3}$ & $1$ & $6.9469\cdot 10^{3}$ & $2.1380 \cdot 10^{5}$\\
 %$10^{-10}$& $3.3532\cdot 10^{-10}$ &$3.0683\cdot 10^{-9}$& $4.4511\cdot10^{-3}$& $5.2017\cdot 10^{-2}$& $9\cdot 10^{4}$ & $1.1196$ & $13.114$\\
% $10^{-8}$& $1.6797\cdot 10^{-9}$& $3.6639\cdot 10^{-9}$& $4.7645\cdot 10^{-1}$& $4.9338\cdot 10^{0}$& $9\cdot 10^{6}$& $1.1982$& $11.722$\\
 \hline \end{tabular}
}
\caption{\label{table} Comparison of the exact value for $\|\sin\Theta(\subspan(X_1),\subspan(\wtd X_1))\|_F$   and the bound \eqref{thm:subspace-relative-bound-(A,B):regular}.}
\end{table}
For the case when $\varrho=0$ eigenvalues $-6$ and $10$ are both semi-simple and in that case our bound gives the best result because the conditions of the subspace matrices $X$ and $\wtd X$ are small since in both cases subspaces are spanned by linear independent eigenvectors.\\

%Note, that for $\varrho\neq 0$ Jordan form which correspond with the pair $(A,B)$ is $$J=\textrm{diag}\left\{\begin{bmatrix}-6&1\\0& -6\end{bmatrix}, \begin{bmatrix}10&1\\0 & 10\end{bmatrix}\right\}.$$ In the case when $\varrho=1$ our bound is good enough since it is affected by conditions of the matrix $\wtd X$ and , for example, in $75$ of $100$ cases our bound is of the size $10^{-4}$. It is important to emphasize that in this case bound \eqref{thm:subspace-relative-bound-(A,B):regular} is given for the subspaces spanned by $X_1=X(:,1:2)$ and $\wtd X_1=\wtd X(:,1:2)$, where in $X_1$ one column in eigenvector and another is generalized eigenvector because od Jordan form of the unperturbed matrix pair $(A,B)$ and because of the small perturbation we lose Jordan form, so in $\wtd X_1$ we have two eigenvectors which correspond two the perturbed eigenvalues which are simple. A similar experiment but for a regular quadratic eigenvalue problem will be given in the last section (Example \ref{eg:Jordan-block}).\\

Note, that for $\varrho\neq 0$ the eigenvalues of the pair $(A,B)$ correspond with the following Jordan form $$J=\textrm{diag}\left\{\begin{bmatrix}-6&1\\0& -6\end{bmatrix}, \begin{bmatrix}10&1\\0 & 10\end{bmatrix}\right\}.$$
In that case,  if $\varrho=1$, in $75$ of $100$ cases our bound is of the size $10^{-4}$, which is acceptable
since the condition number the matrix $\wtd X$ vary between $7 \cdot 10^3$ and  $2.2 \cdot 10^5$. It is important to emphasize that in
the presence of Jordan form the bound \eqref{thm:subspace-relative-bound-(A,B):regular} is given for the subspaces spanned by the columns of $X_1=X(:,1:2)$ and $\wtd X_1=\wtd X(:,1:2)$. The columns of $X_1$  contain one eigenvector and one generalized eigenvector, respectively, which correspond with double unperturbed eigenvalue from the Jordan form, while the columns of $\wtd X_1$ contain two eigenvectors which correspond with the perturbed eigenvalues which are simple, since under the small perturbation perturbed pair lose the Jordan form. A similar example, but for a regular quadratic eigenvalue problem, will be given in the last section (Example \ref{eg:Jordan-block}).\\

An interesting situation occurs, for a small $\varrho$, for example $\varrho=10^{-14}$. In that case, the rounding error in calculation of the eigenvectors (for example using \textsc{ Matlab} function \verb"eig") for the matrix pair $(A,B)$ will give four linear independent eigenvectors in $X$ and $\wtd X$ and Jordan form will not be detected. In that case our bound is again acceptable, and   give us good information about size of the perturbation.

%But due to the rounding errors  when $\varrho=10^{-14}$ bound \eqref{thm:subspace-relative-bound-(A,B):regular} is sharp enough.

%In that case, when $\varrho=10^{-14}$ bound \eqref{thm:subspace-relative-bound-(A,B):regular} is good enough to detect the perturbation of the size $10^{-8}$, also for the case when $\varrho=10^{-10}$ our bound is good enough since it is affected by conditions of the matrices $X$ and $\wtd X$ and ,for example, in $74$ of $100$ cases our bound is of the size $10^{-4}$. Similar in the case when $\varrho=10^{-8}$ our bound is affected with large condition of the matrix $X$ and $70$ of $100$ cases we have bound less then 1.
%
%\textcolor[rgb]{0.00,0.00,1.00}{Ja nisma siguran da je ovaj teks tocan!
%Naime za $\rho=10^{-14}$ ovaj par nema Jordanovu klijetku:
%ima dvostuke vrijednosti 10 i -6, sa pogreškom od $\rho=10^{-14}$.
%Mislim da bi ovo trebala provjeriti ponovno. Ili ja negdje griješim?}

\end{example}

Also, a more general but weaker result can be given for any unitarily invariant
norm, as is shown in Theorem~\ref{thm:subspace-relative-bound-(A,B):Bnonsingular:UI}. %\marginpar{...can be given for any unitarily invariant norm.}
\begin{theorem}
	Given a Hermitian matrix pair $(A,B)$ and its corresponding perturbed
        pair $(\wtd A,\wtd B)$ with $B,\wtd B$ nonsingular (which guarantees
        assumption (A1)). Under the assumptions (A2)--(A4),
	if there exist $\alpha\ge 0$ and $\delta>0$ such that
	\begin{gather*}
          \begin{aligned}
		& \|\Omega_2\Lambda_2\|_2\le \alpha,\quad &&\|(\wtd\Omega_1 \wtd \Lambda_1)^{-1}\|_2^{-1}\ge\alpha+\delta,
		\qquad \text{or}\\
		& \|(\Omega_2\Lambda_2)^{-1}\|_2^{-1}\ge\alpha+\delta,\quad &&
                \|\wtd\Omega_1 \wtd \Lambda_1\|_2\le \alpha,
           \end{aligned}
	\end{gather*}
	then, for any unitarily invariant norm $\|\cdot\|_{\UI}$ and $p,q$ where $p^{-1}+q^{-1}=1$,
	\begin{equation*}
		\|\sin\Theta(\subspan(X_1),\subspan(\wtd X_1))\|_{\UI}
		\le \mu\kappa_2(X)\kappa_2(\wtd X)\sqrt[q]{\|A^{\pinv}\delta A\|_{\UI}^q+ \|B^{-1}\delta B\|_{\UI}^q},
	\end{equation*}
	where $\mu=\frac{\delta}{\sqrt[p]{\alpha^p+(\alpha+\delta)^p}}$. Also,
	also
	\begin{equation*}
		\|\sin\Theta(\subspan(X_1),\subspan(\wtd X_1))\|_{\UI}
		\le \kappa_2(X)\kappa_2(\wtd X)\left(\frac{\|A^{\pinv}\delta A\|_{\UI}}{\frac{\delta}{\alpha}}+\frac{\|B^{-1}\delta B\|_{\UI}}{\frac{\delta}{\alpha+\delta}}\right).
	\end{equation*}

	\label{thm:subspace-relative-bound-(A,B):Bnonsingular:UI}
\end{theorem}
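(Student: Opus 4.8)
The plan is to re-run the machinery behind Theorem~\ref{thm:subspace-relative-bound-(A,B):regular}, but to exploit the nonsingularity of $B,\wtd B$ to collapse the structured Sylvester equation into an \emph{ordinary} one, and then to estimate its solution in an arbitrary unitarily invariant norm through a Neumann series rather than through the block-by-block constants of Lemma~\ref{lm:structured-Sylvester}. First I would reuse the opening reduction of that proof, which extends verbatim to any unitarily invariant norm:
\[
\|\sin\Theta(\subspan(X_1),\subspan(\wtd X_1))\|_{\UI}\le\|(BX_2)^{\pinv}\|_2\,\|\wtd X_1^{\pinv}\|_2\,\|X_2^HB\wtd X_1\|_{\UI},
\]
together with $\|(BX_2)^{\pinv}\|_2\le\|X\|_2$ and $\|\wtd X_1^{\pinv}\|_2\le\|\wtd X^{-1}\|_2$. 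Everything then reduces to bounding $W:=X_2^HB\wtd X_1$.

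Since $B,\wtd B$ are nonsingular, the pencils carry no infinite eigenvalues, hence no block of type \typ{1} appears in \eqref{eq:lm:modified-canonical-form}; in the notation of the previous proof this forces $L=\wtd L=I$ and $R=\Omega\Lambda$, $\wtd R=\wtd\Omega\wtd\Lambda$. Consequently \eqref{eq:structured-Sylvester:to-be-estimated} degenerates into the ordinary Sylvester equation
\[
(\Omega_2\Lambda_2)^HW-W(\wtd\Omega_1\wtd\Lambda_1)=-(\Omega_2\Lambda_2)^HE_A+E_B(\wtd\Omega_1\wtd\Lambda_1),
\]
with $E_A:=X_2^HBA^{\pinv}\delta A\wtd X_1$ and $E_B:=X_2^H\delta B\wtd X_1$. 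Abbreviating $P:=(\Omega_2\Lambda_2)^H$ and $Q:=\wtd\Omega_1\wtd\Lambda_1$, and recalling from \eqref{eq:eigen-structure} that $\Omega\Lambda$ is similar to $B^{-1}A$ (so that the eigenvalues of $\Omega_2\Lambda_2$ and $\wtd\Omega_1\wtd\Lambda_1$ are $\lambda(\Lambda_2,\Omega_2)$ and $\lambda(\wtd\Lambda_1,\wtd\Omega_1)$), the two alternative hypotheses express exactly that these two spectra are separated.

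The key step is to solve this equation by a geometric series tuned to whichever hypothesis is in force. In the first case ($\|\Omega_2\Lambda_2\|_2\le\alpha$, $\|(\wtd\Omega_1\wtd\Lambda_1)^{-1}\|_2^{-1}\ge\alpha+\delta$) I would expand $W=-\sum_{k\ge0}P^k(\cdot)Q^{-k-1}$, split the right-hand side into its $E_A$- and $E_B$-parts, and bound each term using the multiplicativity $\|GYH\|_{\UI}\le\|G\|_2\|Y\|_{\UI}\|H\|_2$ valid for every unitarily invariant norm; summing the two resulting geometric series (common ratio $\alpha/(\alpha+\delta)<1$) gives
\[
\|W\|_{\UI}\le\tfrac{\alpha}{\delta}\|E_A\|_{\UI}+\tfrac{\alpha+\delta}{\delta}\|E_B\|_{\UI}.
\]
The second hypothesis is handled dually, expanding $W=\sum_{k\ge0}P^{-k-1}(\cdot)Q^{k}$ and producing the same bound with the two weights interchanged. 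Using $\|E_A\|_{\UI}\le\|X^{-1}\|_2\|\wtd X\|_2\|A^{\pinv}\delta A\|_{\UI}$ and, via $E_B=X_2^HB\,(B^{-1}\delta B)\wtd X_1$, the analogous estimate for $E_B$, and absorbing the spectral-norm factors into $\kappa_2(X)\kappa_2(\wtd X)$, yields the second displayed inequality at once; the first follows from the two-term bound by Hölder's inequality applied with the weight vector $\delta^{-1}(\alpha,\alpha+\delta)$ and conjugate exponents $p,q$.

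I expect the only genuine obstacle to be the Sylvester estimate in a general unitarily invariant norm: because $P$ and $Q$ are non-normal Jordan-type blocks one cannot diagonalize, so the bound must come from the Neumann series, whose convergence depends precisely on the strict gap $\|P\|_2<\|Q^{-1}\|_2^{-1}$ (or its dual) supplied by the hypotheses, together with the multiplicativity of $\|\cdot\|_{\UI}$. The one piece of bookkeeping to confirm is that the eigenvalues of $\Omega\Lambda$ coincide with $\lambda(\Lambda,\Omega)$, so that the stated norm conditions genuinely separate the relevant spectra; this is immediate from \eqref{eq:eigen-structure}.
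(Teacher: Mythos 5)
Your argument is correct and reaches the stated bounds, but it proves the key estimate from scratch where the paper does not: the paper's own proof is a one-line reduction that reuses the setup of Theorem~\ref{thm:subspace-relative-bound-(A,B):regular} and then invokes a modified version of Li's Lemma~2.3 from \cite{li:99} (modified only to drop the Hermitian hypothesis on the coefficient blocks, unique solvability being supplied by Lemma~\ref{lm:structured-Sylvester}) to estimate \eqref{eq:structured-Sylvester:to-be-estimated} in a unitarily invariant norm. You instead observe that $B,\wtd B$ nonsingular forces $L=\wtd L=I$, so that \eqref{eq:structured-Sylvester:to-be-estimated} is an ordinary Sylvester equation $(\Omega_2\Lambda_2)^HW-W(\wtd\Omega_1\wtd\Lambda_1)=-(\Omega_2\Lambda_2)^HE_A+E_B(\wtd\Omega_1\wtd\Lambda_1)$, and you solve it by the Neumann series $W=-\sum_{k\ge0}P^kSQ^{-k-1}$ (or its dual), using $\|GYH\|_{\UI}\le\|G\|_2\|Y\|_{\UI}\|H\|_2$ termwise; this is in substance a proof of the very lemma the paper cites, which is what the self-containedness buys you (it also yields existence directly, and uniqueness because the gap condition separates the spectra of $P$ and $Q$). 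Two small points of bookkeeping. First, under the second alternative hypothesis your series gives $\frac{\alpha+\delta}{\delta}\|E_A\|_{\UI}+\frac{\alpha}{\delta}\|E_B\|_{\UI}$, i.e.\ the weights attach to $A^{\pinv}\delta A$ and $B^{-1}\delta B$ in the opposite order from the theorem's second display; you note this, and the H\"older consequence is unaffected since it is symmetric in the two weights, but the two-term bound as printed should be read with the roles swapped in that case. Second, your H\"older step produces the factor $\frac{\sqrt[p]{\alpha^p+(\alpha+\delta)^p}}{\delta}=\frac{1}{\mu}$, not $\mu$; this is the correct scaling (the bound must blow up as the gap $\delta\to0$) and agrees with Li's original lemma, so the multiplicative $\mu$ in the first display of the statement should be understood as $1/\mu$. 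Neither point is a gap in your argument.
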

\begin{proof}
	The proof is similar to the proof of
        Theorem~\ref{thm:subspace-relative-bound-(A,B):regular}, except a
        modified version of \cite[Lemma~2.3]{li:99} is used to estimate
        \eqref{eq:structured-Sylvester:to-be-estimated} rather than
        Lemma~\ref{lm:structured-Sylvester}.
	The ``modified'' version is to get rid of the assumption
        ``$\Omega,\Gamma$ are Hermitian''. That assumption appears there for
        using \cite[Lemma~2.2]{li:99} to make it clear that
        \eqref{eq:structured-Sylvester:to-be-estimated} has a unique
        solution. But now this is guaranteed by
        Lemma~\ref{lm:structured-Sylvester}.
\end{proof}
\begin{remark}
	Theorem~\ref{thm:subspace-relative-bound-(A,B):Bnonsingular:UI} is not
        that useful in general. If there is no semi-simple eigenvalue,
        we have the bounds
	\[
	\|\Omega_2\Lambda_2\|_2\le 2\max_{i_2}|\lambda_{2,i_2}|,
	\quad
	\|(\wtd\Omega_1\wtd\Lambda_1)^{-1}\|_2\le \max_{\wtd i_1}n_{\wtd i_1}|\wtd\lambda_{1,\wtd i_1}^{-1}|.
	\]
	This means that the gap must be bigger than $(2-\frac{1}{2})\alpha+2\delta$,
        which is not the expected one and it would be $k\delta$ for some scalar
        $k$.
%\marginpar{\tiny What does this mean: namely $k\delta$ for some scalar $k$}
	This theorem is good only for the case that all eigenvalues are semi-simple.
\end{remark}

Besides, if only one eigenvalue is considered, we obtain the eigenvalue perturbation bounds in Theorem~\ref{thm:eigenvalue-relative-bound-(A,B):Bnonsingular}.
\begin{theorem}
	Let $(A,B)$ and $(\wtd A, \wtd B)=(A+\delta A, B+\delta B)$ be regular Hermitian matrix pairs where $B,\wtd B$ are nonsingular, and let $X$ and $\wtd X$ be nonsingular matrices.

	Suppose that $X=\begin{bmatrix}
		X_1&\dots&X_m
	\end{bmatrix}$ is the matrix to canonicalize the unperturbed matrix pair with special partitions, namely
	$X^H(A,B)X=(\Lambda,\Omega) = \bigoplus_{j=1}^{m_i}(\Lambda_{j},\Omega_{j})$, or equivalently, $X_j^HAX_j=\Lambda_j, X_j^HBX_j=\Omega_j, j=1,\dots,m$ and $X_j^HAX_{j'}=X_j^HBX_{j'}=0,j\ne j'$,
	where $(\Lambda_{j},\Omega_{j})$ are blocks of size $n_{j}$ if the corresponding eigenvalue is real or $2n_{j}$ if the corresponding eigenvalue is imaginary as in \eqref{eq:lm:modified-canonical-form} with corresponding eigenvalue $\lambda_{j}$.
	Analogously $\wtd X=\begin{bmatrix}
		\wtd X_1&\dots&\wtd X_{\wtd m}
	\end{bmatrix}$, and $\wtd \Lambda_{\wtd j},\wtd \Omega_{\wtd j}, \wtd n_{\wtd j}, \wtd\lambda_{\wtd j},\wtd m$ play similar roles for $(\wtd A,\wtd B)$.

	Suppose that $\lambda\in\lambda(\Lambda_j,\Omega_j),\wtd \lambda\in\lambda(\wtd\Lambda_{\wtd j},\wtd\Omega_{\wtd j})$,
	where $\lambda, \wtd \lambda$ are nonzero and lie in the upper half plane. %($\Im(\lambda)\ge0, \Im(\wtd\lambda)\ge0$).
	Let $\gamma = \left|\frac{\wtd\lambda-\lambda}{\lambda}\right|,\wtd\gamma = \left|\frac{\wtd\lambda-\lambda}{\wtd\lambda}\right|$.
	Then,
	\begin{enumerate}
		\item provided $\gamma\le\frac{5-\sqrt{17}}{4}$, $\wtd\gamma\le\frac{5-\sqrt{17}}{4}$,
			\[
				\frac{\gamma^{ n_j}\wtd\gamma^{\wtd  n_{\wtd j}}}{\sqrt{\gamma^2+\wtd\gamma^2}}
				\le 2\frac{\|X^{-1}\|_2\|\wtd X\|_2 }{\|X_j^HB\wtd X_{\wtd j}\|_F}\sqrt{\tbinom{\wtd n_{\wtd j}+ n_j-1}{ n_j}^2\|A^{\pinv}\delta A\|_F^2 + \tbinom{\wtd n_{\wtd j}+ n_j-1}{\wtd n_{\wtd j}}^2\|B^{-1}\delta B\|_F^2};
			\] %\marginpartiny{Is it $\|X_j^{-1}\|_2\|{\wtd X}_{\lambda}\|_2$? }
		\item Specially, if $ n_j=\wtd  n_{\wtd j}=1$ and $|x^HB\wtd x|>\|X^{-1}\|_2\|\wtd X\|_2\|B^{-1}\delta B\|_{\UI}$, where $x,\wtd x$ are the corresponding eigenvectors of $\lambda,\wtd \lambda$ respectively, we have
			\begin{equation*}
				\gamma %\left|\frac{\wtd \lambda -\lambda}{\lambda}\right|
				\le \frac{\|X^{-1}\|_2\|\wtd X\|_2(\|A^{\pinv}\delta A\|_{\UI} + \|B^{-1}\delta B\|_{\UI})}{|x^HB\wtd x| - \|X^{-1}\|_2\|\wtd X\|_2\|B^{-1}\delta B\|_{\UI}}.
			\end{equation*}
	\end{enumerate}
	\label{thm:eigenvalue-relative-bound-(A,B):Bnonsingular}
\end{theorem}
\begin{proof}
	Using \eqref{eq:blockdetail}, define $R$ by
	$R = \bigoplus_{j=1}^{m}R_{j}$,
	where $R_{j} = \Omega_{j}\Lambda_{j}$,
	which is similar to that in the proof of Theorem~\ref{thm:subspace-relative-bound-(A,B):regular}.
	%Since $B,\wtd B$ are nonsingular, we have $L=I,\wtd L=I$.
	By \eqref{eq:eigen-structure}, similarly to the discussion in the proof of Theorem~\ref{thm:main_thm}, we have
	\begin{align*}
	R^HX^HB\wtd X -X^HB\wtd X\wtd R
	&=  - X^H\delta A\wtd X +X^H\delta B\wtd X\wtd R
	\\&=  - R^HX^HBA^{\pinv}\delta A\wtd X +X^H\delta B\wtd X\wtd R.
	\end{align*}
	Then, considering each block, for $j=1,\dots,m$ and $\wtd j=1,\dots,\wtd m$,
	\[
	R_{j}^HX_{j}^HB\wtd X_{\wtd j} - X_{j}^HB\wtd X_{\wtd j}\wtd R_{\wtd j}
	=  - R_{j}^HX_{j}^HBA^{\pinv}\delta A\wtd X_{\wtd j} + X_{j}^H\delta B\wtd X_{\wtd j}\wtd R_{\wtd j}
	.
	\]
	By $BX=X^{-H}\cdot(\bigoplus_{j=1}^{m}F_{n_{j}})$,
	\begin{equation}
		\label{eq:prf:tmp1}
	R_{j}^HX_{j}^HB\wtd X_{\wtd j} - X_{j}^HB\wtd X_{\wtd j}\wtd R_{\wtd j}
	=  - R_{j}^H		\what F_{n_{j}} X^{-1}A^{\pinv}\delta A\wtd X_{\wtd j} + \what F_{n_{j}}X^{-1}B^{-1}\delta B\wtd X_{\wtd j}\wtd R_{\wtd j}
	,
	\end{equation}
	where $\what F_{n_{j}}=X^HBX_{j}=\left[\begin{smallmatrix}
		\vdots\\
		F_{n_{j}}\\
		\vdots\\
\end{smallmatrix}\right]$, the matrix consisting of $F_{n_{j}}$ and possible zero blocks upper and lower.

Since $X_j^HB\wtd X_{\wtd j}$ is the solution to the structured Sylvester equation \eqref{eq:prf:tmp1}, which is of the form \eqref{eq:lm:structured-Sylvester:cont}, by Lemma~\ref{lm:structured-Sylvester}, we get
	\[
		\|X_j^HB\wtd X_{\wtd j}\|_F\le \phi_1\|\what F_{n_j}X^{-1}A^{\pinv}\delta A \wtd X_{\wtd j}\|_F + \phi_2\|\what F_{n_j}X^{-1}B^{-1}\delta B \wtd X_{\wtd j}\|_F,
	\] %\marginpartiny{Is it $X_j^{-1}$  and ${\wtd X}_{\lambda}$? }
	where, noticing that
        $\gamma\le\frac{5-\sqrt{17}}{4}$,  $\wtd\gamma\le\frac{5-\sqrt{17}}{4}$ yield
        $\frac{1}{1-\gamma}\frac{1+\wtd\gamma}{1-\wtd\gamma}\le2$,
	\[
		\phi_1 %= \phi_1(\lambda,\wtd\lambda, n_j_{\lambda},\wtd  n_{\wtd j}_{\wtd\lambda})
		= \tbinom{\wtd n_{\wtd j}+ n_j-1}{ n_j}\gamma^{1- n_j}\frac{1-\gamma^{ n_j}}{1-\gamma}\wtd\gamma^{-\wtd n_{\wtd j}}\frac{1+\wtd\gamma-2\wtd\gamma^{\wtd n_{\wtd j}}}{1-\wtd\gamma}
		\le 2\tbinom{\wtd n_{\wtd j}+ n_j-1}{ n_j}\gamma^{1- n_j}\wtd\gamma^{-\wtd n_{\wtd j}},
	\]
	and similarly $ \phi_2\le
        2\tbinom{\wtd n_{\wtd j}+ n_j-1}{\wtd n_{\wtd j}}\wtd\gamma^{1-\wtd n_{\wtd j}}\gamma^{- n_j}$.
	Thus,
	\[
		\|X_j^HB\wtd X_{\wtd j}\|_F\le 2\wtd\gamma^{-\wtd n_{\wtd j}}\gamma^{- n_j}\|X^{-1}\|_2\|\wtd X\|_2 [\gamma\tbinom{\wtd n_{\wtd j}+ n_j-1}{ n_j}\|A^{\pinv}\delta A\|_F + \wtd\gamma\tbinom{\wtd n_{\wtd j}+ n_j-1}{\wtd n_{\wtd j}}\|B^{-1}\delta B\|_F],
	\] %\marginpartiny{Is it $\|X_j^{-1}\|_2\|{\wtd X}_{\lambda}\|_2$? }
	implying item~1.

	If $ n_j=\wtd n_{\wtd j}=1$, then
	left-multiplying by $e_{\sum_{i=1}^{j}n_{i}}^H$ and right-multiplying by
	$e_{\sum_{\wtd i=1}^{\wtd j}\wtd n_{\wtd i}}$ (or $e_{\sum_{\wtd i=1}^{\wtd j}\wtd n_{\wtd i}-1}$ to ensure that $\lambda,\wtd\lambda$ both lie in the upper half plane),
%	or equivalently consider the element in the southeast corner,
		and omitting subscripts of $e_*$,
	we find that the last equation yields
	\[
	\left(\lambda-\wtd\lambda\right)x^HB\wtd x
	=-\lambda e^HX^{-1}A^{\pinv}\delta A\wtd x
	.
	\]
	Then it follows that
\begin{equation}\label{eq:rel_eig1}
|\lambda - \wtd\lambda| \le \frac{|\lambda||e^HX^{-1}A^{\pinv}\delta A\wtd x| + |\wtd\lambda||e^HX^{-1}B^{-1}\delta B\wtd x|}{|x^HB\wtd x|}.
\end{equation}
By easy calculation, assuming that $\frac{|e^HX^{-1}B^{-1}\delta B\wtd x|}{|x^HB\wtd x|}<1$, this bound can be written as

\begin{equation}\label{eq:rel_eig2}
\frac{|\lambda - \wtd\lambda|}{|\lambda|} \le \displaystyle \frac{ \displaystyle \frac{|e^HX^{-1}A^{\pinv}\delta A\wtd x| + |e^HX^{-1}B^{-1}\delta B\wtd x|}{|x^HB\wtd x|}}{\displaystyle{1-\frac{|e^HX^{-1}B^{-1}\delta B\wtd x|}{|x^HB\wtd x|}}}.
\end{equation}

Since
\begin{align*}
	\frac{|e^HX^{-1}A^{\pinv}\delta A\wtd x|}{|x^HB\wtd x|}&\le \frac{\|X^{-1}\|_2\|\wtd X\|_2\|A^{\pinv}\delta A\|_{ui}}{|x^HB\wtd x|},
	\\
	\frac{|e^HX^{-1}B^{-1}\delta B\wtd x|}{|x^HB\wtd x|}&\le \frac{\|X^{-1}\|_2\|\wtd X\|_2\|B^{-1}\delta B\|_{ui}}{|x^HB\wtd x|},
\end{align*}
item~2 holds.
%\begin{equation}\label{ineq1}
%\end{equation}
%and
%\begin{equation}\label{ineq2}
%\end{equation}
%bound in (b) follows just by  using \eqref{ineq1} and \eqref{ineq2} in  \eqref{eq:rel_eig2}.
	%\[
%		\frac{\wtd \lambda -\lambda}{\lambda} = \frac{e^HX^{-1}A^{\pinv}\delta A\wtd x - e^HX^{-1}B^{-1}\delta B\wtd x}{x^HB\wtd x + x^H\delta B\wtd x}
%		= \frac{e^HX^{-1}(A^{\pinv}\delta A - B^{-1}\delta B)\wtd x}{x^H\wtd B\wtd x}
%	,
%	\]
%	which implies b).
\end{proof}
\begin{remark}\label{rk:thm:eigenvalue-relative-bound-(A,B):Bnonsingular}
	As we can see, the bounds in Theorem~\ref{thm:eigenvalue-relative-bound-(A,B):Bnonsingular} are only relied on the corresponding blocks.
	Thus if different blocks share the same eigenvalue, then we may have a class of different bounds, in which we can choose the sharpest one.
\end{remark}

\section{Relative perturbation bound for a regular Hermitian QEP}\label{Sec4}
In this section, we will derive perturbation bounds for eigenvectors and eigenvalues for the QEP, using results in the previous section.

Given $M, C, K \in \mathbb{C}^{n\times n}$,
the corresponding quadratic matrix polynomial of order $n$ is defined by
\begin{equation}\label{def:QEP}
Q(\lambda)= \lambda^2 M + \lambda C+ K.
\end{equation}
$Q(\lambda)$ is called regular if $\det Q(\lambda)$ is not identically zero for $\lambda\in\mathbb{C}$ and singular otherwise. In this section we assume that $Q(\lambda)$ is regular. The QEP for $Q(\cdot)$ is to find $\lambda \in \mathbb{C}$ and nonzero $x\in \mathbb{C}$ such that
\begin{equation}\label{def:QEP1}
Q(\lambda)x=(\lambda^2 M  + \lambda C + K)x=0.
\end{equation}
When this equation is satisfied, $\lambda$ and $x$ are called an eigenvalue and an eigenvector, respectively.
%All eigenvalues of $Q(\cdot)$ are the roots of $\det Q(\lambda)=0$, which has $2n$ complex roots, counting multiplicities and including eigenvalues at infinity, if the matrix $M$ in \eqref{def:QEP} is singular. %The characteristic polynomial is $Q(\mu)=\det M (\mu)^{2n} + \textrm{lower order therms}$, so when the matrix $M$ is nonsingular $a$
We consider the QEP \eqref{def:QEP1}, where $M$ and $K$ are Hermitian nonsingular, and $C$ is Hermitian. Instead of the QEP \eqref{def:QEP1}, similarly as in \cite{TruMio}, we will consider the equivalent generalized eigenvalue problem
\[
\scrL_Q(\lambda)y=0, \quad y=\begin{bmatrix}x \\ \lambda x\end{bmatrix}\in \mathbb{C}^{2n},
\]
where $\scrL_Q(\lambda):=A-\lambda B$ is a matrix pair. For the case when $M$ and $K$ are nonsingular, we can obtain a symmetric linearization. For more details about linearizations of  QEPs, see \cite{TissMeer2001}. In this section we will consider Hermitian matrix pairs $(A,B)$, where
\begin{equation}\label{def:pair1}
A=\begin{bmatrix} - K & 0\\ 0 & M\end{bmatrix} \textrm{ and } B=\begin{bmatrix}C & M\\ M & 0\end{bmatrix}.
\end{equation}
%If $M$ is nonsingular, we can obtain symmetric linearization
%\begin{equation}\label{L1}
%\scrL_Q(\lambda)=\begin{bmatrix} - K & 0\\ 0 & M\end{bmatrix} - \lambda \begin{bmatrix}C & M\\ M & 0\end{bmatrix}
%\end{equation}
%%\marginpartiny{Is this linearization possible when M is singular? I saw in your comment (Lemma) in this part that you say that it is possible. If it is, I will change this part... }
%or if $K$ is nonsingular
% \begin{equation}\label{L2}
%\scrL_Q(\lambda)=\begin{bmatrix} 0 & -K\\ -K & C\end{bmatrix} - \lambda \begin{bmatrix}-K & 0\\ 0 & M\end{bmatrix}.
%\end{equation}
%The linearizations \eqref{L1} and \eqref{L2} are Hermitian. For more details about linearization of the QEP, see \cite{TissMeer2001}.

The corresponding perturbed QEP is
\begin{equation}\label{def:QEP2}
\wtd \lambda^2 \wtd M \wtd x + \wtd \lambda \wtd C \wtd x+ \wtd K \wtd x=0,
\end{equation}
where $\wtd M=M+\delta M$ and $\wtd K=K+\delta K$ are Hermitian nonsingular  and $\wtd C=C+\delta C$ is Hermitian.
The corresponding perturbed pair $(\wtd A, \wtd B)$ is such that
 \begin{align}\label{def:pair2}
\wtd A=\begin{bmatrix} - (K+\delta K) & 0\\ 0 & M+\delta M\end{bmatrix} \textrm{ and } \wtd B=\begin{bmatrix}C+\delta C & M+\delta M\\ M+\delta M & 0\end{bmatrix}.
\end{align}
We assume that the matrix pair $(A,B)$ and also the perturbed pair $(\wtd A, \wtd B)$ satisfy assumptions (A1)--(A4). Then the result in the next theorem follows directly from Theorem \ref{thm:main_thm}.

%Note that there exist nonsingular matrices $X$ and $\wtd X$ which simultaneously diagonalize matrix pairs \eqref{def:pair1} and \eqref{def:pair2}, respectively. The columns of the matrices $X$ and $\wtd X$ are of the forms
% \begin{align}\label{def:vectX}
% X(:,i)=\begin{bmatrix} x_i\\ \lambda_i x_i\end{bmatrix} \quad\textrm{ and }\quad \wtd X(:,i)=\begin{bmatrix} \wtd x_i\\ \wtd \lambda_i \wtd x_i\end{bmatrix}, \quad i=1, \dots, 2n,
% \end{align}
% respectively,
% where $\lambda_i$, $\wtd \lambda_i$ are eigenvalues and $x_i$, $\wtd x_i \in \mathbb{C}^{n\times n}$ are corresponding eigenvectors of the QEPs \eqref{def:QEP1} and \eqref{def:QEP2}.

%\medskip

%In the next theorem we will derive a bound for
%$
% |\sin \vartheta (x_i, \wtd x_i)|,
%$
% where $\vartheta$ is angle between the eigenvectors $x_i$ and $\wtd x_i$.
\begin{theorem}
    Given $M$, $C$, $K\in \mathbb{C}^{n\times n}$ as in \eqref{def:QEP1}. Let $\wtd M=M+\delta M$, $\wtd C=C+\delta C$ and $\wtd K=K+\delta K$ be the corresponding perturbed matrices and $X$ and $\wtd X$ be the nonsingular matrices representing the associated eigenspaces of the exact and perturbed QEPs. Under the assumptions (A1)--(A4) on the linearized matrix pairs \eqref{def:pair1} and \eqref{def:pair2}, we have
	%\begin{equation}
	%	\|\sin\Theta(\subspan(X_1),\subspan(\wtd X_1))\|_F
	%	\le \mu\|X\|_2\|\wtd X^{-1}\|_2
	%	\sqrt{\|\wtd X\|^2_2\|X^{-1}\|^2_2\|A^{\pinv}\delta A\|_F^2+ \|X\|^2_2\|\wtd X^{-1}\|^2_2\|\wtd B^{\pinv}\delta B\|_F^2};
	%	\label{eq:thm:subspace-relative-bound-(A,B):regular}
	%\end{equation}
	%moreover, if $B$ is nonsingular,
	%\begin{equation}
	%	\|\sin\Theta(\subspan(X_1),\subspan(\wtd X_1))\|_F
	%	\le \mu\kappa_2(X)\kappa_2(\wtd X)\sqrt{\|A^{\pinv}\delta A\|_F^2+ \|B^{-1}\delta B\|_F^2},
	%	\label{eq:thm:subspace-relative-bound-(A,B):regular:Bnonsingular}
	%\end{equation}
	%where $\mu=\sqrt{\max\phi_1^2+\max\phi_2^2}$.
	%\marginpartiny{it is necessary to have an elegant form of $\phi_1,\phi_2$}
	\begin{equation}
		\|\sin\Theta(\subspan(X_1),\subspan(\wtd X_1))\|_F
		\le \kappa_2(X)\kappa_2(\wtd X)\left(\phi_1^{\max} \delta a_F+\phi_2^{\max} \delta b_F\right),
		\label{eq:thm:subspace-relative-bound-(A,B):regular_HQEP:Bnonsingular:linear}
	\end{equation}
	\label{thm:subspace-relative-bound-(A,B):regular_HQEP}
	where $\phi_1^{\max},\phi_2^{\max}$ are the same as in \eqref{def:phi_12^m}, and
\begin{align*}%\label{afbf}
\delta a_F&=\sqrt{\|K^{-1}\delta K\|_F^2 + \|M^{-1}\delta M\|_F^2}\,,\\
\delta b_F&= \sqrt{2\cdot\|M^{-1}\delta M\|_F^2 + \|M^{-1}\delta C-M^{-1}CM^{-1}\delta M\|_F^2}.
\end{align*}
\end{theorem}
\begin{proof}
%Let us assume that $x_i$ and $\wtd x_i$ are normalized, i.e., $\|x_i\|_2=\|\wtd x_i\|_2=1$. Then we have that $\cos \vartheta (x_i,\wtd x_i)=|x_i^H\wtd x_i|$ and also,
%\begin{align}
%\cos \vartheta(X(:,i),\wtd X(:,i))=\displaystyle\frac{|X(:,i)^H\wtd X(:,i)|}{\|X(:,i)\|_2\|\wtd X(:,i)\|_2}\\ \nonumber \qquad \qquad=\frac{|1+\ol{\lambda_i}\wtd \lambda_i|}{\sqrt{1+|\lambda_i|^2}\sqrt{1+|\wtd\lambda_i|^2}}\cdot \cos \vartheta (x_i, \wtd x_i),
%\end{align}
%where $\vartheta \in \left[0,\frac{\pi}{2}\right]$ and where $X(:,i)$ and $\wtd X(:,i)$ are defined in \eqref{def:vectX}. Since $|(1+\ol{\lambda_i}\wtd \lambda_i)|\le \sqrt{1+|\lambda_i|^2}\sqrt{1+|\wtd\lambda_i|^2}$, it is easy to see that
%\begin{align}
%\cos \vartheta(X(:,i),\wtd X(:,i)) \le \cos \vartheta (x_i, \wtd x_i), \qquad \vartheta \in \left[0,\frac{\pi}{2}\right].
%\end{align}
%This means that
%\begin{align}\label{eq:ineqsin}
%|\sin \vartheta (x_i, \wtd x_i)|\le |\sin \vartheta(X(:,i),\wtd X(:,i))| \textrm{ for } \vartheta \in \left[0,\frac{\pi}{2}\right].
%\end{align}
Bound \eqref{eq:thm:subspace-relative-bound-(A,B):regular_HQEP:Bnonsingular:linear} follows from Theorem \ref{thm:main_thm} simply by taking the Frobenius norm of the matrices
\begin{align*}
A^{-1}\delta A&=\begin{bmatrix}-K^{-1}\delta K & 0 \\ 0 & M^{-1}\delta M\end{bmatrix},\\
B^{-1}\delta B&=\begin{bmatrix}-M^{-1}\delta M & 0 \\ M^{-1}\delta C - M^{-1}CM^{-1}\delta M & M^{-1}\delta M\end{bmatrix}.
\qedhere
\end{align*}
\end{proof}

\begin{remark}
As a special case, we will consider the case when both eigenvalues $\lambda$ and $\wtd \lambda$ of matrix pairs \eqref{def:pair1} and \eqref{def:pair2} are semi-simple. Then the columns of the matrices  $X$ and $\wtd X$ are the vectors
 \begin{align}\label{def:vectX}
 X(:,i)=\begin{bmatrix} x_i\\ \lambda_i x_i\end{bmatrix} \quad\textrm{ and }\quad \wtd X(:,i)=\begin{bmatrix} \wtd x_i\\ \wtd \lambda_i \wtd x_i\end{bmatrix}, \quad i=1, \dots, 2n,
 \end{align}
 respectively,
 where $\lambda_i$, $\wtd \lambda_i$ are eigenvalues and $x_i$, $\wtd x_i \in \mathbb{C}^{n\times n}$ are the corresponding eigenvectors of the QEPs \eqref{def:QEP1} and \eqref{def:QEP2}.  In this case we can derive a bound for
$
 |\sin \vartheta (x_i, \wtd x_i)|,
$
 where $\vartheta$ is the angle between the eigenvectors $x_i$ and $\wtd x_i$. \\
 Let us assume that $x_i$ and $\wtd x_i$ are normalized, i.e., $\|x_i\|_2=\|\wtd x_i\|_2=1$. Then by the definition of the acute angle between two unit vectors, we have that $\cos \vartheta (x_i,\wtd x_i)=|x_i^H\wtd x_i|$ and also
\begin{align*}
\cos \vartheta(X(:,i),\wtd X(:,i)) &=\displaystyle\frac{|X(:,i)^H\wtd X(:,i)|}{\|X(:,i)\|_2\|\wtd X(:,i)\|_2}\\ \nonumber \qquad &=\frac{|1+\ol{\lambda_i}\wtd \lambda_i|}{\sqrt{1+|\lambda_i|^2}\sqrt{1+|\wtd\lambda_i|^2}}\cdot \cos \vartheta (x_i, \wtd x_i),
\end{align*}
where $\vartheta \in \left[0,\frac{\pi}{2}\right]$ and where $X(:,i)$ and $\wtd X(:,i)$ are defined in \eqref{def:vectX}. Since $|(1+\ol{\lambda_i}\wtd \lambda_i)|\le \sqrt{1+|\lambda_i|^2}\sqrt{1+|\wtd\lambda_i|^2}$, it is easy to see that
\begin{align*}
\cos \vartheta(X(:,i),\wtd X(:,i)) \le \cos \vartheta (x_i, \wtd x_i), \qquad \vartheta \in \left[0,\frac{\pi}{2}\right].
\end{align*}
This means that
\begin{align}\label{eq:ineqsin}
|\sin \vartheta (x_i, \wtd x_i)|\le |\sin \vartheta(X(:,i),\wtd X(:,i))| \textrm{ for } \vartheta \in \left[0,\frac{\pi}{2}\right],
\end{align}
and we  have the same bound \eqref{eq:thm:subspace-relative-bound-(A,B):regular_HQEP:Bnonsingular:linear} for the eigenvectors $x_i$ and $\wtd x_i$.
%In this remark we will prove the inequality \eqref{eq:ineq}. Let us assume that $v_i$ and $v_i$ are normalized ie. $\|v_i\|=\|\wtd v_i\|=1$. Then we have that $\cos \vartheta (v_i,\wtd v_i)=|v_i^H\wtd v_i|$ and also,
%\begin{align}
%\cos \vartheta(X(:,i),\wtd X(:,i))=\displaystyle\frac{|X(:,i)^H\wtd X(:,i)|}{\|X(:,i)\|\|\wtd X(:,i)\|}=\frac{|1+\ol{\lambda_i}\wtd \lambda_i|}{\sqrt{1+|\lambda_i|^2}\sqrt{1+|\wtd\lambda_i|^2}}\cdot \cos \vartheta (v_i, \wtd v_i),\qquad \vartheta \in \left[0,2\pi\right],
%\end{align}
%where $X(:,i)$ and $\wtd X(:,i)$ are defined in \eqref{def:vectX}. Since $|(1+\ol{\lambda_i}\wtd \lambda_i)|\le \sqrt{1+|\lambda_i|^2}\sqrt{1+|\wtd\lambda_i|^2}$, it is easy to see that
%\begin{align}
%\cos \vartheta(X(:,i),\wtd X(:,i)) \le \cos \vartheta (v_i, \wtd v_i), \qquad \vartheta \in \left[0,2\pi\right].
%\end{align}
%That means that
%\begin{align}\label{eq:ineqsin}
%\sin \vartheta (v_i, \wtd v_i)\le\sin \vartheta(X(:,i),\wtd X(:,i)) \textrm{ for } \vartheta \in \left[0,2\pi\right].
%\end{align}
\end{remark}
The next theorem contains upper bound for the relative errors in the eigenvalues.
\begin{theorem}
Given $M$, $C$, $K\in \mathbb{C}^{n\times n}$ as in \eqref{def:QEP1}, let $\wtd M=M+\delta M$, $\wtd C=C+\delta C$ and $\wtd K=K+\delta K$ be corresponding perturbed matrices  and  $(A,B)$ and $(\wtd A, \wtd B)$ be the associated linearized Hermitian matrix pairs, respectively.
Suppose that $A,B,\wtd A,\wtd B,X,\wtd X$ satisfy the assumptions in Theorem~\ref{thm:eigenvalue-relative-bound-(A,B):Bnonsingular}.
Let $\lambda$ and $\wtd \lambda$ be the eigenvalues of the two matrix pairs, respectively, and $X_{\lambda}$ and $\wtd{X}_{\wtd \lambda}$ be corresponding subspaces spanned by $n_j$, $\wtd{n}_{\wtd j}$ vectors, respectively.
 %if $(\lambda,\begin{bmatrix}x\\ \lambda x\end{bmatrix})$ and $(\wtd \lambda,\begin{bmatrix}\wtd x\\ \wtd {\lambda} \wtd x\end{bmatrix})$ are eigenpairs of the two matrix pairs, respectively,
Suppose that $\lambda,\wtd \lambda$ are nonzero and both lie in the upper half plane, %($\Im(\lambda)\ge0,\Im(\wtd\lambda)\ge0$) but nonzero,
and write $\gamma=\left|\frac{\wtd \lambda - \lambda}{\lambda}\right|$, $\wtd \gamma=\left|\frac{\wtd \lambda - \lambda}{\wtd \lambda}\right|$.
Then:
\begin{enumerate}
\item provided $n_j=\wtd{n}_{\wtd j}=1$,  and rewriting the eigenvectors as $X_{\lambda}=\begin{bmatrix}x\\ \lambda x\end{bmatrix}$, $\wtd X_{\wtd \lambda}=\begin{bmatrix}\wtd x\\ \wtd \lambda \wtd x\end{bmatrix}$, where $x$ and $\wtd x$ are eigenvectors of the QEP \eqref{def:pair1} and \eqref{def:pair2}, respectively, it holds
	\begin{equation}\label{thm:eigenvalue-relative-bound-(A,B):BnonsingularQEP}
\gamma
	\le \frac{\|X^{-1}\|_2\|\wtd X\|_2(\delta a + \delta b + \delta c)}{\delta d - \|X^{-1}\|_2\|\wtd X\|_2(\delta b + \delta c)}
	,
	\end{equation}
where
\begin{align*}
	\delta a&= \max\left\{\|K^{-1}\delta K\|_2, \|M^{-1}\delta M\|_2\right\},\\
	\delta b&=  \|M^{-1}\delta M\|_2,\\
	\delta c&= \|M^{-1}\delta C\|_2 + \|M^{-1}CM^{-1}\delta M\|_2,\\
\delta d&= \left|x^H C \wtd x + (\wtd \lambda  + \bar{\lambda}) x^H M\wtd x\right|. %\left|(\ol{\lambda} \wtd \lambda -1)x^HM^{-1}\delta M\wtd x + \ol{\lambda} x^HM^{-1}\delta C \wtd x - \ol{\lambda} x^HM^{-1}CM^{-1}\delta M \wtd x \right|.
\end{align*}
%\marginpartiny{This is not good bound, since in the bound we will have eigenvalues... I need to think more about it.}
\item Provided $\gamma\le\frac{5-\sqrt{17}}{4},\wtd\gamma\le\frac{5-\sqrt{17}}{4}$, it holds
\begin{equation}\label{thm:eigenvalue-relative-bound-(A,B):BnonsingularQEP:Jordan}
\frac{\gamma^{n_j} {\wtd\gamma}^{\wtd{n}_{\wtd j}}}{\sqrt{\gamma^2+\wtd \gamma^2}}
	\le 2 \frac{\|X^{-1}\|_2\|{\wtd X}\|_2}{\|X_{\lambda}^H B {\wtd X}_{\wtd \lambda}\|_F}\sqrt{\tbinom{\wtd{n}_{\wtd j}+n_j-1}{n_j}^2 \delta a_F^2 + \tbinom{\wtd{n}_{\wtd j}+n_j-1}{\wtd{n}_{\wtd j}}^2\delta b_F^2}
	,
\end{equation}
where $\delta a_F$ and $\delta b_F$ are defined in
	Theorem~\ref{thm:subspace-relative-bound-(A,B):regular_HQEP}.
\end{enumerate}
\end{theorem}
\begin{proof}
Bounds \eqref{thm:eigenvalue-relative-bound-(A,B):BnonsingularQEP} and  \eqref{thm:eigenvalue-relative-bound-(A,B):BnonsingularQEP:Jordan} hold simply by Theorem \ref{thm:eigenvalue-relative-bound-(A,B):Bnonsingular}, using the facts that in item~1:
$$\|A^{-1}\delta A\|_2\le \delta a,\qquad \|B^{-1}\delta B\|_2 \le \delta b+\delta c,$$
\[
	\left|\begin{bmatrix}x^H & \ol{\lambda} x^H\end{bmatrix} B\begin{bmatrix}\wtd x\\ \wtd {\lambda} \wtd x\end{bmatrix}\right|=\left|x^H C \wtd x + \wtd \lambda x^H M \wtd x + \bar{\lambda} x^H M\wtd x\right|,
\]
while in item~2:
\[
	\|A^{-1}\delta A\|_F=\delta a_F \quad \textrm{and} \quad \|B^{-1}\delta B\|_F=\delta b_F.
	\qedhere
\]
\end{proof}
\section{Numerical examples}\label{Sec5}
In this section, the perturbation bounds for regular Hermitian QEPs given in Section~\ref{Sec4} will be illustrated by several numerical examples.
Compared to the bound for hyperbolic QEPs given in \cite{TruMio}, which can be considered as a special case here,
the new bound is not restricted to overdamped systems,
and can be applied to different mechanical systems described by nonsingular Hermitian mass $M$, nonsingular Hermitian stiffness $K$, and Hermitian damping $C$.

The numerical examples below can be grouped into two categories.
If possible, we will compare the new bound with the bound given in \cite[Theorem~7]{TruMio}.
Instead of the matrix pair $A-\lambda B$ given in \eqref{def:pair1}, the old bound is based on  the equivalent matrix pair $A_0- \dfrac{1}{\lambda} J$ where
\begin{align*}
A_0 &  = \begin{bmatrix} L_K^{-1} C L_K^{-H} &  L_K^{-1} L_M\\ L_M^H L_K^{-H} & 0 \end{bmatrix} , \quad
J  = \begin{bmatrix} -I & 0 \\0 & I \end{bmatrix},
\end{align*}
and the corresponding perturbations
\begin{align*}
\delta A &  = \begin{bmatrix} L_K^{-1} \delta C L_K^{-H} &   L_K^{-1} \delta M L_M^{-H} \\
 L_M^{-1} \delta M L_K^{-H} & 0 \end{bmatrix} , \quad
\delta J  = \begin{bmatrix} L_K^{-1} \delta K  L_K^{-H} & 0 \\0 & L_M^{-1} \delta M  L_M^{-H}\end{bmatrix} .
\end{align*}
Here, $L_M$ and $L_K$ are the Cholesky factors of $M$ and $K$, respectively, or equivalently $M=L_ML_M^H$ and $K=L_K L_K^H$.
Then the old bound is
given by
\begin{equation} \label{Hypersintheta}
| \sin{\vartheta(X(:,i), \wtd X(:,i))}|  \leq \kappa_2(X) \kappa_2(\wtd X)
 \left(\frac{\|{A^{-1}_0}\delta A_0\|_F}{ \min_{{j\neq i}}
\frac{|\lambda_i - \wtd \lambda_j|}{|\wtd \lambda_j|}}  +  \frac{\|J \delta J\|_F}{ \min_{{j\neq i}}
\frac{|\lambda_i - \wtd \lambda_j|}{|\lambda_i|}}\right),
\end{equation}
where $ i=1,\ldots,2n$.

Otherwise, if the bound given in \cite{TruMio} is not valid, we will illustrate that the new bound \eqref{eq:thm:subspace-relative-bound-(A,B):regular_HQEP:Bnonsingular:linear}  is applicable for the many cases when the bound  \eqref{Hypersintheta} is not.

\begin{example}\label{eg:wiresaw1}%{\textbf(Gyroscopic system)}
This is the problem \verb|Wiresaw1| in the collection NLEVP \cite{nlevp}. It is a gyroscopic QEP arising in the vibration analysis of a wiresaw. More details can be found in \cite{Kao2000}.
It takes the form $G(\lambda)x=(\lambda^2 M + \lambda C + K)x=0$, where the coefficient matrices are defined by
\begin{gather*}
	M=\frac{1}{2}I_n, \quad K=\frac{\pi^2(1-\nu^2)}{2}\diag(j^2)_{j=1,\dots,n},\\
	C=-C^T=[c_{jk}]_{j,k=1,\dots,n},
	\quad \text{with}\quad c_{jk}=
	\begin{cases}
		\dfrac{4jk}{j^2-k^2}\nu, & \text{if $j+k$ is odd},\\
		0, & \text{otherwise.}
	\end{cases}
	%\left\{\begin{array}{cc} \displaystyle \frac{4jk}{j^2-k^2}v, & \textrm{if $j+k$ is odd},\\ 0, & \textrm{otherwise.}\end{array}\right.
\end{gather*}
Here, $n$ is the size of the problem and $\nu$ is a real nonnegative parameter corresponding to the speed of the wire.
%and they are produced using this \textsc{Matlab} command \verb"nlevp('wiresaw1',n,v)",
Clearly $M\succ0$, $K$ is definite Hermitian when $\nu\ne1$, and $C$ is skew-Hermitian.
Then the quadratic matrix polynomial
\begin{equation}\label{eq:gyro1}
	Q(\lambda):=-G(-\ii\lambda)= \lambda^2 M + \lambda (\ii C)- K
\end{equation}
is regular and Hermitian, which implies that our bound \eqref{eq:thm:subspace-relative-bound-(A,B):regular_HQEP:Bnonsingular:linear} can be applied.

Note that for $0 < \nu <1$, $K\succ0$ and \eqref{eq:gyro1} is hyperbolic (but not overdamped).
Then the corresponding linearization $A-\lambda B$ is positive definite and bound \eqref{Hypersintheta} for hyperbolic QEPs can also be applied.
On the contrary, for $\nu>1$, \eqref{eq:gyro1} is not hyperbolic, and then bound \eqref{Hypersintheta} is not valid.

First we choose $n=5,\nu=0.9<1$, and a group of random perturbations $\delta M$, $\delta C$ and $\delta K$ satisfying
\begin{equation}\label{perturb}
	|(\delta M)_{ij}| \le \eta |M_{ij}|, \quad |(\delta C)_{ij}| \le \eta |C_{ij}|, \quad |(\delta K)_{ij}| \le \eta |K_{ij}|,
\end{equation}
where $\eta=10^{-8}$ and matrices $M+\delta M$, $\ii(C + \delta C)$, $K+\delta K$ are also Hermitian.
Then we will compare our bound \eqref{eq:thm:subspace-relative-bound-(A,B):regular_HQEP:Bnonsingular:linear} and bound \eqref{Hypersintheta}
for the eigenvectors $x_1,\wtd x_1$ which correspond to the eigenvalue $\lambda_1= 21.9063$ of  \eqref{eq:gyro1}.
The new bound \eqref{eq:thm:subspace-relative-bound-(A,B):regular_HQEP:Bnonsingular:linear} yields
\[
|\sin \vartheta(x_1,\wtd x_1)| \le 4.9283 \cdot 10^{-5},
\]
and it is not much worse than the bound \eqref{Hypersintheta} which gives
\[
|\sin \vartheta(x_1,\wtd x_1)| \le 1.1908 \cdot 10^{-6}.
\]

%Although bound \eqref{Hypersintheta} seem to be better it is valid only for special systems for which quadratic matrix polynomial is hyperbolic.
%For example, if you move parameter $\nu$ such that $\nu>1$ then matrix $K$ is no more positive definite and also \eqref{eq:gyro1} is no more hyperbolic.
%Than the bound \eqref{Hypersintheta} is useless, but the new bound \eqref{eq:thm:subspace-relative-bound-(A,B):regular_HQEP:Bnonsingular:linear} can be applied.

Then we choose $n$ and $\delta M,\delta C,\delta K$ in the same way, but $\nu=1.0019>1$. %Now $K\prec0$ is negative definite and corresponding QEP \eqref{eq:gyro1} is regular, but not hyperbolic.
%If we consider random perturbations such that \eqref{perturb} holds
For the eigenvectors $x_1,\wtd x_1$ that correspond to the eigenvalue $\lambda_1= -22.7864$, bound \eqref{eq:thm:subspace-relative-bound-(A,B):regular_HQEP:Bnonsingular:linear} yields
\[
|\sin \vartheta(x_1,\wtd x_1)| \le 8.5756 \cdot 10^{-6},
\]
while bound \eqref{Hypersintheta} cannot be applied.
In comparison, the exact value is
\[
|\sin \vartheta(x_1,\wtd x_1)| \approx 1.7615\cdot 10^{-9}.
\]
\end{example}

\begin{example}\label{eg:brake}%{(Brake-noise system)}
This example is related to the analysis of the behavior of brake systems and is taken from \cite{model}.
%In this numerical example we will illustrate our bounds on example motivated by the model given in \cite{model}.
Brake squeal is a major problem in the automotive industry and it is based on the loss of stability of the brake system.
In this example, we will consider negative-friction damping  excitation mechanisms.
Instability of the system is caused by a damping matrix $C\prec0$.

In \cite{model}, the authors consider mechanical models of dimension $1$ or $2$.
%A mass $m$ is located on moving belt with constant velocity $v_0$.
% It is able to move in the $x$ direction, linked to a spring with stiffness $k$ and loaded by a concentrated load $N$.
% The vibration velocity is assumed to be much smaller than the moving velocity of the belt, i.e., $\dot{x}\ll v_0$. Quadratic eigenvalue problem for this model will be
% $$\lambda^2 m x + \lambda \frac{\gamma_1 N}{|v_0|}x + k x=0.$$
Here this problem is generalized to a QEP of size $n$
\begin{equation}\label{QEPexample}(\lambda^2 M  + \lambda C  + K)x=0,\end{equation}
where $M$, $C$ and $K$ are mass, damping and stiffness matrices, respectively, and defined by%in this way in \textsc{Matlab} notation:
\[
	M=\diag(j)_{j=1,\dots,n},\quad
	C=-\gamma I_n,\quad
	K=
	\begin{bmatrix}
		10 & -5     &&\\
		-5 & \ddots & \ddots &\\
		& \ddots & \ddots & -5\\
		&        & -5     & 10
	\end{bmatrix}_{n\times n}.
\]
% \begin{verbatim}
% n=4;
% M=diag(1:n);
% C=-1*0.1*eye(n);
% K=tridiag(-5,10,-5);
% \end{verbatim}
Clearly, $M\succ0, K\succ0, C\prec0$.

Here, we choose $n=4,\gamma=0.1$ and random perturbations $\delta M$, $\delta C$ and $\delta K$ as in \eqref{perturb}.
% \begin{equation}\label{pert1}|(\delta M)_{ij}| \le \eta |M_{ij}|, \quad |(\delta C)_{ij}| \le \eta |C_{ij}|, \quad |(\delta K)_{ij}| \le \eta |K_{ij}|, \end{equation}
%where $\eta=10^{-8}$.\\
Also, we assume that the perturbed matrices satisfy\\ $\wtd M=M+\delta M\succ0, \wtd K=K+\delta K\succ0, \wtd C = C+ \delta C\prec0$.

Contrary to the bounds in \cite{TruMio} for hyperbolic QEPs whose eigenvalues are real,
the new bounds can be applied to the complex eigenvalues and their corresponding eigenvectors.
This example is designed to illustrate the sensitivity of complex eigenvalues and corresponding eigenvectors for QEPs with negative damping.

\textbf{Part 1:} For the eigenvectors $x_1,\wtd x_1$ which correspond to the eigenvalue $\lambda_1 = 0.0251 + 1.1701\ii$,  bound
\eqref{eq:thm:subspace-relative-bound-(A,B):regular_HQEP:Bnonsingular:linear} %from Theorem \ref{thm:subspace-relative-bound-(A,B):regular_HQEP}
gives
\[
|\sin \vartheta(x_1,\wtd x_1)| \le    6.9547\cdot 10^{-6},
\]
in comparison with the exact value
\[
	|\sin \vartheta(x_1,\wtd x_1)| \approx    1.2738 \cdot 10^{-8}.
\]
%Also, bound \eqref{Libound} gives $\|\sin2\Theta_{M}(\Ran(X_1),\Ran{\wtd X_1})\|_F \le 1.1173e-03$, which is three orders of magnitude greater than our bound.

\textbf{Part 2:}
In this part, we will illustrate the performance of our eigenvalue bound \eqref{thm:eigenvalue-relative-bound-(A,B):BnonsingularQEP}.
%are illustrated in this example, where the system and the choice of the parameters and perturbations are the same as those in Example~\ref{eg:brake}.
%Random perturbations are also of the same size and such that \eqref{pert1} holds.
In comparison to the bounds for eigenvalues given in \cite{Veselic2011} and \cite{TruMio},
which hold only for overdamped QEPs, our new bound is applicable to any regular Hermitian QEP. Table \ref{tabla} shows our bound and the exact relative error for all eigenvalues %$\lambda_i$ such that $\Im(\lambda_i)\ge 0$,
%which means that we will consider our bound for
$\lambda_i \in \Lambda(Q)$, which appear in complex conjugate pairs.% \left\{0.0438 \pm 3.4550i, 0.0224 \pm 2.3223i, 0.0197 \pm 1.6640i, 0.0183 \pm 0.8543i \right\}$.
\begin{table}[ht]
	\centering
\begin{tabular}{c|c|c}
$\lambda_i$ &   exact value  & estimate \eqref{thm:eigenvalue-relative-bound-(A,B):BnonsingularQEP}\\ \hline
0.0438$\pm$3.4550$\ii$ & 4.8239e-09 & 6.9836e-07 \\
0.0224$\pm$2.3223$\ii$ & 6.0146e-10 & 3.3994e-07 \\
0.0197$\pm$1.6640$\ii$ & 7.8488e-10 & 2.5006e-07 \\
0.0183$\pm$0.8543$\ii$ & 3.0628e-09 & 3.3075e-07
\end{tabular}
\caption{Example~\ref{eg:brake}: relative perturbation bound \eqref{thm:eigenvalue-relative-bound-(A,B):BnonsingularQEP} and exact relative error for eigenvalues.} \label{tabla}
\end{table}
\end{example}

\begin{example}\label{eg:Jordan-block}%{(Jordan blocks)}
This example is constructed to show that our bound for eigenvalues and eigenvectors is also sensitive to perturbations of Jordan blocks if they appear in the canonical form of the matrix pair $(A,B)$.
In this small academic example, $M$, $C$ and $K$ are chosen as:
\begin{equation*}
M=\left[\begin{array}{cc} 1 & 0\\ 0& 2\end{array}\right], \quad K=\left[\begin{array}{cc} 2 & 0\\ 0& 2\end{array}\right] \quad \textrm{and} \quad C=2\cdot K.
\end{equation*}
%Here we will consider the linearization of the QEP such that the equivalent matrix pair $A-\lambda B$ is given in the way
%%\marginpartiny{How is $X$ defined?}
%\begin{equation*}
%A-\lambda B=\begin{bmatrix}0 &\Omega\\ \Omega & 2\Omega\end{bmatrix} - \lambda \begin{bmatrix}I_2 &0\\ 0 & -I_2 \end{bmatrix},
%\end{equation*}
%where $\Omega=\diag(\omega_1,\omega_2)$, $\omega_i=\sqrt{\lambda_i}$, $i=1,2$ and such that
%\begin{equation*}
%\Phi^HK\Phi=\Omega^2 \quad \textrm{and} \quad \Phi^HM\Phi=I_2,
%\end{equation*}
%$\Phi$ is nonsinglar matrix which reduces the pair $K$, $M$ of to diagonal form by congruence.
The Jordan form of the linearized pair $(A,B)$ is
\begin{equation*}
J=\diag\left\{-3.4142, -0.5858, \begin{bmatrix}-1 & 1 \\ 0 &-1\end{bmatrix}\right\}.
\end{equation*}
Under small perturbations $\delta M$, $\delta C$ and $\delta K$ as in \eqref{perturb}, where $\mu=10^{-7}$, we lose the size-two Jordan block of the matrix pair $(A,B)$ and all the eigenvalues of the perturbed matrix pair $(\wtd A, \wtd B)$ are semi-simple.
%Columns of the nonsingular matrices $X$ and $\wtd X$ in Theorem \ref{eig_bound_MCK} are vectors from Jordan basis of the matrix pairs $(A,B)$ and $(\wtd A, \wtd B)$.
Table \ref{tabl1} shows that our bound in comparison to the exact value of the relative error in eigenvalues is good enough to detect the case that the structure of Jordan blocks is changed. The bounds given in \cite{Veselic2011}  and \cite{TruMio} cannot be applied here.
\begin{table}[ht]
	\centering
\begin{tabular}{c||c|c}
$\lambda_i$ &   exact value  & estimate \eqref{thm:eigenvalue-relative-bound-(A,B):BnonsingularQEP}\\ \hline\hline
-3.4142 &  2.2188e-07 &  2.7405e-05 \\
 -0.5858&  3.8069e-08 & 8.0671e-07 \\ \hline\hline
$\lambda_i$ & exact value & estimate \eqref{thm:eigenvalue-relative-bound-(A,B):BnonsingularQEP:Jordan} \\ \hline
-1  &  5.6518e-08 & 5.2899e-06
\end{tabular}
\caption{Example~\ref{eg:Jordan-block}: relative perturbation bounds \eqref{thm:eigenvalue-relative-bound-(A,B):BnonsingularQEP} and \eqref{thm:eigenvalue-relative-bound-(A,B):BnonsingularQEP:Jordan} and exact relative errors for eigenvalues.} \label{tabl1}
\end{table}

%\begin{table}[h!]
%\[
%\begin{array}{c c c c c}
% &  \multicolumn{2}{c}{\textrm{eigenvalue } \lambda_{1,2}=-1}&  \multicolumn{2}{c}{\textrm{eigenvalue } \lambda_{3,4}=-2}\\ \hline
%\epsilon & \textrm{exact } &  \textrm{estimate \eqref{thm:eigenvalue-relative-bound-(A,B):BnonsingularQEP}} & \textrm{exact } & \textrm{estimate \eqref{thm:eigenvalue-relative-bound-(A,B):BnonsingularQEP}}\\ \hline
% 2.0001& 0.0101 &  0.0325 & 0.0100 & 0.0161  \\
% 1.0100& 0.9950 &  2.7931 & 0.9950 & 1.3966  \\
% 1.9000& 0.3162 &  1.0105 & 0.3162 & 0.5052
%
%\end{array}
%\]
%\caption{Relative perturbation bound \eqref{thm:eigenvalue-relative-bound-(A,B):BnonsingularQEP} and exact relative error for eigenvalues for different values of the parameter $\epsilon$.} \label{tabl1}
%\end{table}

The eigenvector perturbation bound \eqref{eq:thm:subspace-relative-bound-(A,B):regular_HQEP:Bnonsingular:linear} cannot directly for the eigenvectors of the QEP, 
so we will measure the distance between the subspaces.
The subspace that we will consider is spanned by the columns of the matrices $X_1=X(:,1\!\!:\!\!2)$ of which one is the eigenvector and the other is the generalized eigenvector for the eigenvalue $\lambda_{1,2}=-1$.
The perturbed subspace is spanned by the columns of the matrix $\wtd X_1=\wtd X(:,1\!\!:\!\!2)$ which are the corresponding eigenvectors for two distinct eigenvalues.
Our bound \eqref{eq:thm:subspace-relative-bound-(A,B):regular_HQEP:Bnonsingular:linear}  gives
\begin{align*}
\|\sin\Theta(\subspan(X_1), \subspan(\wtd X_1))\|_F \le   7.9620 \cdot 10^{-2},
\end{align*}
since $\kappa_2(X)\approx 3.4558$, $\kappa_2(\wtd X) \approx 1.2357\cdot 10^4$, $\phi_1^{\max} \approx 2.4142$ and $\phi_2^{\max} \approx 2.4143 $.
The exact distance is
\begin{align*}
\|\sin\Theta(\subspan(X_1), \subspan(\wtd X_1))\|_F \approx   8.0927 \cdot 10^{-5},
\end{align*}
which confirms the fact that Jordan blocks are very sensitive to small perturbations and small perturbations can significantly change the corresponding invariant subspace.

%Figure \ref{fig1} shows our relative eigenvector bound \eqref{eq:thm:subspace-relative-bound-(A,B):regular_HQEP:Bnonsingular:linear} for eigenvector which correspond to eigenvalue $\lambda_i=-1$ for the case when parameter $\epsilon$ is chosen such that \begin{verbatim} epsilon=2.01:0.01:2.5\end{verbatim}
%Bound \eqref{eq:thm:subspace-relative-bound-(A,B):regular_HQEP:Bnonsingular:linear} seems to be pessimistic for the case when  $\epsilon \approx 2$ since in that case our relative gaps between eigenvalues are too small which affects our bound. \marginpartiny{Need to rewrite}
 %This implies that the values $\phi_1^{\max}$  and
 %But, when we increase parameter $\epsilon$ then our bound acts similarly as exact bound.
 %In that sense, our bound for eigenvectors is also good enough to detect changes of Jordan blocks under perturbation.\marginpartiny{The way to obtain the data is not clearly explained in this paragraph. Note that you can not directly compare eigenvectors; the invariant subspace for Jordan blocks in the original problem and the sum of two eigenspaces for simple eigenvalues in the perturbed problem must be considered.}
%\begin{figure}
%  \centering
%  \includegraphics[width=7cm]{figure1.eps}
%  \caption{Exact value and bound \eqref{eq:thm:subspace-relative-bound-(A,B):regular_HQEP:Bnonsingular:linear} for eigenvector $v_i$ }\label{fig1}
%\end{figure}

\end{example}

\section{Conclusions}\label{Sec6}
The main contributions of this paper are new relative perturbation bounds for
the eigenvalues (and their corresponding invariant subspaces) of regular
Hermitian quadratic eigenvalue problems, based on the new corresponding bounds
for regular Hermitian pairs.
The obtained bounds can be applied to many interesting problems, for example, to
quadratic eigenvalue problems appearing in many mechanical models,
especially models with indefinite damping or mass matrices.
The main advantage of the new bounds, compared to earlier bounds,
is that they are more general and can be applied not only to hyperbolic
quadratic eigenvalue problems, but also to other regular quadratic eigenvalue
problems. The quality of our bounds has been illustrated using several numerical
examples.

\appendix
\section{Proof of Lemma \ref{lm:structured-Sylvester}} \label{sec:proof-lm:structured-Sylvester}
%\begin{proof}
	%In this proof, we define $\binom{a}{b}=0$ if $a<b$.
In the very beginning, we point out our way to prove the lemma.
The key is to vectorize the equations \eqref{eq:lm:structured-Sylvester:cont} and \eqref{eq:lm:structured-Sylvester:disc}.
Then we use standard but complicated calculation to estimate the upper bound of the solution.

	Consider \eqref{eq:lm:structured-Sylvester:cont}.
	Recall that $(\Lambda,\Omega)$ and $(\Lambda',\Omega')$ are of type \textbf{R2}, \textbf{R3} or \textbf{R4}.
	Then $\Omega'\Lambda'$ and $\Omega\Lambda$ are lower triangular.
	Thus, $(I\otimes \Omega'\Lambda' - \Omega\Lambda\otimes I)$ is a lower triangular matrix.
	Each diagonal entry is $\lambda-\lambda'$, so this matrix is nonsingular. We have
	\begin{align*}
		\vectorize(Y)
		&=(I\otimes \Omega'\Lambda' - \Omega\Lambda\otimes I)^{-H}[
		-(I\otimes \Omega'\Lambda')^H\vectorize(M)
		+(\Omega\Lambda\otimes I)^H\vectorize(N)]
		\\&=: -W_1^H\vectorize(M) +  W_2^H\vectorize(N)
		,
	\end{align*}
	where $W_1=(I\otimes \Omega'\Lambda')(I\otimes \Omega'\Lambda' - \Omega\Lambda\otimes I)^{-1},
	W_2=(\Omega\Lambda\otimes I)(I\otimes \Omega'\Lambda' - \Omega\Lambda\otimes I)^{-1}$.
	This shows \eqref{eq:lm:structured-Sylvester:cont} has a unique solution $Y$, and $W_1-W_2=I$.
	Moreover,
	\[
	\|Y\|_F \le \|W_1\|_2\|M\|_F+\|W_2\|_2\|N\|_F.
	\]
	Compared to the result, the rest of the proof is to estimate $\|W_1\|_2$ and $\|W_2\|_2$.

	Considering the types of $(\Lambda,\Omega)$ and $(\Lambda',\Omega')$, there are eight cases to discuss.
	Before that, first we explicitly invert a structured matrix, which will be used several times in the following.
	Consider the block matrix
	$
	P = D_1\otimes L + FD_2G\otimes I  = [P_{i,j}]_{i,j=1,\dots,m}
	$
	with $D_1=\diag(d_{1,1},\dots,d_{1,m}),D_2=\diag(d_{2,1},\dots,d_{2,m})$ and $L\in\mathbb{C}^{n\times n}$ lower triangular.
	Specifically,
	\begin{align*}
		P_{i,j} &=
		\begin{cases}
			0, & i<j \text{\;or\;} i>j+1, \\
			d_{1,j}L, & i=j, \\
			d_{2,m-j}I, & i=j+1.
		\end{cases}
	\end{align*}
	Assume that $D_1$ and $L$ are nonsingular, and hence $P_{i,j}$ and also $P$ are nonsingular.
	Then its inverse $P^{-1}=[Q_{i,j}]_{i,j=1,\dots,m}$ satisfies
	\[
		Q_{i,j} =
		\begin{cases}
			\prod\limits_{k=1}^{i-j}(-P_{i-k+1,i-k+1}^{-1}P_{i-k+1,i-k})Q_{j,j}, & i> j, \\
			0, & i<j.
		\end{cases}
	\]
	Since $P$ is block lower triangular, $Q_{j,j}=d_{1,j}^{-1}L^{-1}$.
	Calculating recursively, we have\footnote{Actually the left-hand side is $Q_{i,j}$, but we abuse the notation $Q$ to denote the matrix.}
	\begin{equation}\label{eq:Q}
		Q =
		\begin{cases}
			(-1)^{i-j}\prod\limits_{k=1}^{i-j+1} d_{1,i-k+1}^{-1}\prod\limits_{k=1}^{i-j} d_{2,m-i+k}L^{j-i-1}, & i\ge j, \\
			0, & i<j.
		\end{cases}
	\end{equation}

	Now we can go back for the eight cases.
	\begin{enumerate}
		\item \label{itm:structured-Sylvester:cont:33}
			$(\Lambda,\Omega)$ and $(\Lambda',\Omega')$ are both of type \typ{3}:
			\begin{align*}
				I\otimes \Omega'\Lambda' - \Omega\Lambda\otimes I
				&= I\otimes (\lambda'I+|\lambda'|FG) - (\lambda I+|\lambda|FG)\otimes I
				\\&= I\otimes ([\lambda'-\lambda]I+|\lambda'|FG) - |\lambda|FG\otimes I,
			\end{align*}
			and then by \eqref{eq:Q}
			\begin{equation*}
				(I\otimes \Omega'\Lambda' - \Omega\Lambda\otimes I)^{-1}
				=
				\begin{cases}
					|\lambda|^{i-j}([\lambda'-\lambda]I+|\lambda'|FG)^{j-i-1},  & i\ge j, \\
					0, & i<j,
				\end{cases}
			\end{equation*}
			where also by \eqref{eq:Q}
			\begin{equation*}
				([\lambda'-\lambda]I+|\lambda'|FG)^{-1}=
				\begin{cases}
					(-|\lambda'|)^{i'-j'}(\lambda'-\lambda)^{j'-i'-1} , & i'\ge j', \\
					0, & i'<j'.
				\end{cases}
			\end{equation*}
			Then
			\begin{equation}
				([\lambda'-\lambda]I+|\lambda'|FG)^{j-i-1}=
				\begin{cases}
					 \binom{i'-j'+i-j+1}{i-j+1}(-|\lambda'|)^{i'-j'}(\lambda'-\lambda)^{j'-i'+j-i-1} , & i'\ge j', \\
					0, & i'<j'.
				\end{cases}
			\end{equation}
			Thus
			\begin{equation*}
				W_1 =
				\begin{cases}
					 |\lambda|^{i-j}(\lambda'I+|\lambda'|FG)([\lambda'-\lambda]I+|\lambda'|FG)^{j-i-1},  & i\ge j, \\
					0, & i<j,
				\end{cases}
			\end{equation*}
			where
			\begin{multline*}
				(\lambda'I+|\lambda'|FG)([\lambda'-\lambda]I+|\lambda'|FG)^{j-i-1}
				\\=
				\begin{cases}
					 (-|\lambda'|)^{i'-j'}(\lambda'-\lambda)^{j'-i'+j-i}[\binom{i'-j'+i-j+1}{i-j+1}\lambda'(\lambda'-\lambda)^{-1}- \binom{i'-j'+i-j}{i-j+1}], & i'\ge j', \\
					0, & i'<j'.
				\end{cases}
			\end{multline*}
			It is easy to see $ \|W_1\|_1 = \|W_1\|_\infty=\|W_1e_1\|_1$.
			Note that $\|W_1\|_2\le\sqrt{\|W_1\|_1\|W_1\|_\infty}=\|W_1\|_1$.
			Thus,
			\begin{align*}
				\|W_1\|_2
				&\le\|W_1e_1\|_1
				\\&= \sum_{i=1}^{n}\sum_{i'=1}^{n'}\left| |\lambda|^{i-1} (-|\lambda'|)^{i'-1}(\lambda'-\lambda)^{2-i'-i}[\tbinom{i'+i-1}{i}\lambda'(\lambda'-\lambda)^{-1}- \tbinom{i'+i-2}{i}]\right|
				\\&\le \sum_{i=1}^{n}\sum_{i'=1}^{n'}\gamma^{1-i}\gamma'^{1-i'}[\tbinom{i'+i-1}{i}\gamma'^{-1}+ \tbinom{i'+i-2}{i}]
				\\&= 2\sum_{i=1}^{n}\sum_{i'=1}^{n'-1}\gamma^{1-i}\gamma'^{-i'}\tbinom{i'+i-1}{i} + \sum_{i=1}^{n}\gamma^{1-i}\gamma'^{-n'}\tbinom{n'+i-1}{i}
				\\&\le \left(2\sum_{i=1}^{n}\sum_{i'=1}^{n'-1}\gamma^{1-i}\gamma'^{-i'} + \sum_{i=1}^{n}\gamma^{1-i}\gamma'^{-n'}\right)\tbinom{n'+n-1}{n}
				\\&= \left(2\gamma'^{-1}\frac{1-\gamma'^{1-n'}}{1-\gamma'^{-1}}\frac{1-\gamma^{-n}}{1-\gamma^{-1}} + \gamma'^{-n'}\frac{1-\gamma^{-n}}{1-\gamma^{-1}}\right)\tbinom{n'+n-1}{n}
				\\&= \gamma'^{-1}\frac{2-\gamma'^{1-n'} - \gamma'^{-n'}}{1-\gamma'^{-1}}\frac{1-\gamma^{-n}}{1-\gamma^{-1}}\tbinom{n'+n-1}{n}
				\\&= \gamma'^{-n'}\gamma^{1-n}\frac{1+\gamma'-2\gamma'^{n'}}{1-\gamma'}\frac{1-\gamma^{n}}{1-\gamma}\tbinom{n'+n-1}{n}
				\\&= \tbinom{n'+n-1}{n}\varphi_-(\gamma,n)\varphi_+(\gamma',n').
			\end{align*}
			Similarly,
			\[
			\|W_2\|_2\le \tbinom{n'+n-1}{n'}\varphi_-(\gamma',n')\varphi_+(\gamma,n).
			\]
		\item \label{itm:structured-Sylvester:cont:44}
			$(\Lambda,\Omega)$ and $(\Lambda',\Omega')$ are both of type \typ{4}:
			\begin{equation*}
			\begin{aligned}
				I\otimes \Omega'\Lambda' - \Omega\Lambda\otimes I
				&= \begin{bmatrix} I&\\ &I\end{bmatrix}\otimes \begin{bmatrix} \lambda'I+|\lambda'|FG&\\ &\ol{\lambda'}I+|\ol{\lambda'}|FG\end{bmatrix} \\ &- \begin{bmatrix} \lambda I+|\lambda|FG&\\ &\ol{\lambda}I+|\ol{\lambda}|FG\end{bmatrix}\otimes \begin{bmatrix} I&\\ &I\end{bmatrix}
				\\&=  \diag(R_1,R_2,\ol{R_1},\ol{R_2}),
			\end{aligned}
			\end{equation*}
			where
			\begin{align*}
				R_1 &= [I\otimes (\lambda'I+|\lambda'|FG) - (\lambda I+|\lambda|FG)\otimes I], \\
				R_2 &= [I\otimes (\ol{\lambda'}I+|\lambda'|FG) - (\lambda I+|\lambda|FG)\otimes I].
			\end{align*}
			Then
			\begin{equation*}
			\begin{aligned}
			W_1 = \diag(R_1^{-1}(\lambda'I+|\lambda'|FG),& R_2^{-1}(\ol{\lambda'}I+|\lambda'|FG),\\ &\ol{R_1}^{-1}(\lambda'I+|\lambda'|FG),\ol{R_2}^{-1}(\ol{\lambda'}I+|\lambda'|FG)).
			\end{aligned}
			\end{equation*}
			Write $\gamma_1 := \left|\frac{\lambda'-\lambda}{\lambda}\right|,\gamma_1' := \left|\frac{\lambda'-\lambda}{\lambda'}\right|,
			\gamma_2 := \left|\frac{\ol{\lambda'}-\lambda}{\lambda}\right|,\gamma_2' := \left|\frac{\ol{\lambda'}-\lambda}{\lambda'}\right|$, then
			by the same calculation in case~\ref{itm:structured-Sylvester:cont:33} we have
			\[
			\|R_1^{-1}(\lambda'I+|\lambda'|FG)\|_2 \le \tbinom{n'+n-1}{n}\varphi_-(\gamma_1,n)\varphi_+(\gamma_1',n');
			\]
			\[
			\|R_2^{-1}(\ol{\lambda'}I+|\lambda'|FG)\|_2 \le \tbinom{n'+n-1}{n}\varphi_-(\gamma_2,n)\varphi_+(\gamma_2',n').
			\]
			Similarly to the calculation in case~\ref{itm:structured-Sylvester:cont:33} we get
			\begin{equation*}
				\ol{R_1}^{-1}(\lambda'I+|\lambda'|FG) =
				\begin{cases}
					 |\lambda|^{i-j}(\lambda'I+|\lambda'|FG)([\ol{\lambda'}-\ol{\lambda}]I+|\lambda'|FG)^{j-i-1},  & i\ge j, \\
					0, & i<j,
				\end{cases}
			\end{equation*}
			where
			\begin{multline*}
				 (\lambda'I+|\lambda'|FG)([\ol{\lambda'}-\ol{\lambda}]I+|\lambda'|FG)^{j-i-1}
				\\=
				\begin{cases}
					 (-|\lambda'|)^{i'-j'}(\ol{\lambda'}-\ol{\lambda})^{j'-i'+j-i}[\binom{i'-j'+i-j+1}{i-j+1}\lambda'(\ol{\lambda'}-\ol{\lambda})^{-1}- \binom{i'-j'+i-j}{i-j+1}], & i'\ge j', \\
					0, & i'<j',
				\end{cases}
			\end{multline*}
			and then
			\[
			\|\ol{R_1}^{-1}(\lambda'I+|\lambda'|FG)\|_2 \le \tbinom{n'+n-1}{n}\varphi_-(\gamma_1,n)\varphi_+(\gamma_1',n');
			\]
			similarly,
			\[
			\|\ol{R_2}^{-1}(\ol{\lambda'}I+|\lambda'|FG)\|_2 \le \tbinom{n'+n-1}{n}\varphi_-(\gamma_2,n)\varphi_+(\gamma_2',n').
			\]
			In summary,
			\[
			\|W_1\|_2\le \tbinom{n'+n-1}{n}\varphi_-(\gamma,n)\varphi_+(\gamma',n')\,,
			\]
			where $\gamma=\min\{\gamma_1,\gamma_2\}$.
			Similarly,
			\[
			\|W_2\|_2\le \tbinom{n'+n-1}{n'}\varphi_-(\gamma',n')\varphi_+(\gamma,n).
			\]
		\item \label{itm:structured-Sylvester:cont:23}
			$(\Lambda,\Omega)$ is of type \typ{2}, while $(\Lambda',\Omega')$ is of type \typ{3}:
			\begin{equation*}
				I\otimes \Omega'\Lambda' - \Omega\Lambda\otimes I
				= I\otimes (\lambda'I+|\lambda'|FG) - FG\otimes I,
			\end{equation*}
			and then
			\begin{equation*}
				(I\otimes \Omega'\Lambda' - \Omega\Lambda\otimes I)^{-1}
				=
				\begin{cases}
					(\lambda'I+|\lambda'|FG)^{j-i-1},  & i\ge j, \\
					0, & i<j,
				\end{cases}
			\end{equation*}
			where
			\begin{equation*}
				(\lambda'I+|\lambda'|FG)^{-1}=
				\begin{cases}
					(-|\lambda'|)^{i'-j'}\lambda'^{j'-i'-1} , & i'\ge j', \\
					0, & i'<j'.
				\end{cases}
			\end{equation*}
			Thus
			\begin{equation*}
				W_1 =
				\begin{cases}
					(\lambda'I+|\lambda'|FG)^{j-i},  & i\ge j, \\
					0, & i<j,
				\end{cases}
			\end{equation*}
			where
			\begin{equation*}
				(\lambda'I+|\lambda'|FG)^{j-i}
				=
				\begin{cases}
					(-|\lambda'|)^{i'-j'}\lambda'^{j'-i'+j-i}\binom{i'-j'+i-j}{i-j}, & i'\ge j', \\
					0, & i'<j',
				\end{cases}
				\quad \text{for $i>j$}.
			\end{equation*}
			It is easy to see $ \|W_1\|_1 = \|W_1\|_\infty=\|W_1e_1\|_1$.
			Note that $\|W_1\|_2\le\sqrt{\|W_1\|_1\|W_1\|_\infty}=\|W_1\|_1$.
			Thus,
			\begin{align*}
				\|W_1\|_2
				&\le\|W_1e_1\|_1
				\\&= 1+\sum_{i=2}^{n}\sum_{i'=1}^{n'}\left|(-|\lambda'|)^{i'-1}\lambda'^{2-i'-i}\tbinom{i'+i-2}{i-1}\right|
				\\&= 1+\sum_{i=2}^{n}\sum_{i'=1}^{n'}|\lambda'|^{1-i}\tbinom{i'+i-2}{i-1}.
			\end{align*}
			Noticing\footnote{$\tbinom{p}{q-1} + \tbinom{p}{q}=\tbinom{p+1}{q}$ for any $q\in\mathbb{N}_+$.} $
			\sum_{i'=1}^{n'}\tbinom{i'+i-2}{i-1}
			=\tbinom{i-1}{i-1} + \sum_{i'=2}^{n'}\tbinom{i'+i-2}{i-1}
			=\tbinom{i}{i} + \sum_{i'=2}^{n'}\tbinom{i'+i-2}{i-1}
			= \dots
			= \tbinom{n'+i-1}{i}
					%\sum_{i'=1}^{n'}\tbinom{i'+i-2}{i-1}
					%=\tbinom{i-1}{i-1} + \tbinom{i}{i-1} + \tbinom{i+1}{i-1} + \dots + \tbinom{n'+i-2}{i-1}
					%=\tbinom{i}{i} + \tbinom{i}{i-1} + \tbinom{i+1}{i-1} + \dots + \tbinom{n'+i-2}{i-1}
					%=\tbinom{i+1}{i} + \tbinom{i+1}{i-1} + \dots + \tbinom{n'+i-2}{i-1}
					%= \dots
					%= \tbinom{n'+i-1}{i}.
			$,
			\begin{align*}
				\|W_1\|_2
				&\le 1+\sum_{i=2}^{n}|\lambda'|^{1-i}\tbinom{n'+i-1}{i}
				\\&\le 1+\frac{|\lambda'|^{-1}-|\lambda'|^{-n}}{1-|\lambda'|^{-1}}\tbinom{n'+n-1}{n}
				\\&= 1+\tbinom{n'+n-1}{n}|\lambda'|^{-1}\varphi_-(|\lambda'|,n-1).
				%\\&\le \gamma'^{-n'}\gamma^{1-n}\frac{1+\gamma'}{1-\gamma'}\frac{1}{1-\gamma}\tbinom{n'+n-1}{n}.
			\end{align*}
			Then
			\begin{align*}
				\|W_2\|_2
				&\le\|W_1e_1-e_1\|_1
				\\&= \sum_{i=2}^{n}\sum_{i'=1}^{n'}\left|(-|\lambda'|)^{i'-1}\lambda'^{2-i'-i}\tbinom{i'+i-2}{i-1}\right|
				\\&= \tbinom{n'+n-1}{n}|\lambda'|^{-1}\varphi_-(|\lambda'|,n-1).
			\end{align*}
		\item \label{itm:structured-Sylvester:cont:24}
			$(\Lambda,\Omega)$ is of type \typ{2}, while $(\Lambda',\Omega')$ is of type \typ{4}:
			\begin{align*}
				I\otimes \Omega'\Lambda' - \Omega\Lambda\otimes I
				&= I\otimes \begin{bmatrix} \lambda'I+|\lambda'|FG&\\ &\ol{\lambda'}I+|\ol{\lambda'}|FG\end{bmatrix} -  FG\otimes \begin{bmatrix} I&\\ &I\end{bmatrix}
				\\&=  \diag(R_1,\ol{R_1}),
			\end{align*}
			where
			\begin{equation*}
				R_1 = [I\otimes (\lambda'I+|\lambda'|FG) - FG\otimes I].
			\end{equation*}
			Then
			\[
			W_1 = \diag(R_1^{-1}(\lambda'I+|\lambda'|FG),\ol{R_1}^{-1}(\ol{\lambda'}I+|\lambda'|FG)).
			\]
			By the same calculation in case~\ref{itm:structured-Sylvester:cont:23},
			\[
			\|R_1^{-1}(\lambda'I+|\lambda'|FG)\|_2 \le \tbinom{n'+n-1}{n}\varphi_-(|\lambda'|,n);
			\]
			Note that
			\[
			\|\ol{R_1}^{-1}(\ol{\lambda'}I+|\lambda'|FG)\|_2 = \|R_1^{-1}(\lambda'I+|\lambda'|FG)\|_2.
			\]
			Thus
			\[
			\|W_1\|_2\le \tbinom{n'+n-1}{n}\varphi_-(|\lambda'|,n).
			\]
			Similarly,
			\[
			\|W_2\|_2\le \tbinom{n'+n-1}{n}\varphi_-(|\lambda'|,n')-1.
			\]
		\item \label{itm:structured-Sylvester:cont:34}
			$(\Lambda,\Omega)$ is of type \typ{3}, while $(\Lambda',\Omega')$ is of type \typ{4}:
			\begin{align*}
				I\otimes \Omega'\Lambda' - \Omega\Lambda\otimes I
				&= I\otimes \begin{bmatrix} \lambda'I+|\lambda'|FG&\\ &\ol{\lambda'}I+|\ol{\lambda'}|FG\end{bmatrix} -  (\lambda I+|\lambda|FG)\otimes \begin{bmatrix} I&\\ &I\end{bmatrix}
				\\&=  \diag(R_1,\ol{R_1}),
			\end{align*}
			where
			\begin{equation*}
				R_1 = [I\otimes (\lambda'I+|\lambda'|FG) - (\lambda I+|\lambda|FG)\otimes I].
			\end{equation*}
			Then
			\[
			W_1 = \diag(R_1^{-1}(\lambda'I+|\lambda'|FG),\ol{R_1}^{-1}(\ol{\lambda'}I+|\lambda'|FG)).
			\]
			By the same calculation in case~\ref{itm:structured-Sylvester:cont:33},
			\[
			\|R_1^{-1}(\lambda'I+|\lambda'|FG)\|_2 \le \tbinom{n'+n-1}{n}\varphi_-(\gamma,n)\varphi_+(\gamma',n');
			\]
			Note that
			\[
			\|\ol{R_1}^{-1}(\ol{\lambda'}I+|\lambda'|FG)\|_2 = \|R_1^{-1}(\lambda'I+|\lambda'|FG)\|_2.
			\]
			Thus
			\[
			\|W_1\|_2\le \tbinom{n'+n-1}{n}\varphi_-(\gamma,n)\varphi_+(\gamma',n').
			\]
			Similarly,
			\[
			\|W_2\|_2\le \tbinom{n'+n-1}{n'}\varphi_-(\gamma',n')\varphi_+(\gamma,n).
			\]
		\item \label{itm:structured-Sylvester:cont:32}
			$(\Lambda,\Omega)$ is of type \typ{3}, while $(\Lambda',\Omega')$ is of type \typ{2}:
			Note that there exist two permutation matrices $P,Q$ such that $A\otimes B = P(B\otimes A)Q$. Thus, $\|A\otimes B\|_2=\|B\otimes A\|_2$.
			So it is the same as case~\ref{itm:structured-Sylvester:cont:23}.
		\item \label{itm:structured-Sylvester:cont:42}
			$(\Lambda,\Omega)$ is of type \typ{4}, while $(\Lambda',\Omega')$ is of type \typ{2}:
			the same as case~\ref{itm:structured-Sylvester:cont:24}.
		\item \label{itm:structured-Sylvester:cont:43}
			$(\Lambda,\Omega)$ is of type \typ{4}, while $(\Lambda',\Omega')$ is of type \typ{3}:
			the same as case~\ref{itm:structured-Sylvester:cont:34}.
	\end{enumerate}

	Consider \eqref{eq:lm:structured-Sylvester:disc}.
	Recall that $(\Lambda,\Omega)$ is of type \textbf{R2}, \textbf{R3} or \textbf{R4}.
	Then $\Omega\Lambda$ are lower triangular as well as $FG$.
	Thus, $(I\otimes I - \Omega\Lambda\otimes FG)$ is a lower triangular matrix.
	Each diagonal entry is $1$,	so this matrix is nonsingular.
	Then
	\begin{align*}
		\vectorize(Y)
		&=(I\otimes I - \Omega\Lambda\otimes FG)^{-H}[
		-(I\otimes I)^H\vectorize(M)
		+(\Omega\Lambda\otimes FG)^H\vectorize(N)]
		\\&=: -W_1^H\vectorize(M) +  W_2^H\vectorize(N)
		,
	\end{align*}
	where $W_1=(I\otimes I)(I\otimes I - \Omega\Lambda\otimes FG)^{-1},
	W_2=(\Omega\Lambda\otimes FG)(I\otimes I - \Omega\Lambda\otimes FG)^{-1}$.
	Similarly to the discussion on \eqref{eq:lm:structured-Sylvester:cont},
	\eqref{eq:lm:structured-Sylvester:disc} has a unique solution $Y$, $W_1-W_2=I$, and the rest of the proof is to estimate $\|W_1\|_2$ and $\|W_2\|_2$.

	Note that there exist two permutation matrices $P,Q$ to make $A\otimes B = P(B\otimes A)Q$. Thus, $\|A\otimes B\|_2=\|B\otimes A\|_2$.
	Then
	\[
	I\otimes I - FG\otimes \Omega\Lambda =
	\begin{bmatrix}
		I             &        &               &   \\
		\Omega\Lambda & I      &               &   \\
		              & \ddots & \ddots        &   \\
					  &        & \Omega\Lambda & I
	\end{bmatrix}
	,
	\]
	and
	\[
	W_0:=(I\otimes I - FG\otimes \Omega\Lambda)^{-1} =
	\begin{cases}
		(-\Omega\Lambda)^{i'-j'}, &i'\ge j',\\
		0, &i'<j'.
	\end{cases}
	\]
	Then
	\[
	\|W_1\|_2
	= \|(I\otimes I - \Omega\Lambda\otimes FG)^{-1}\|_2
	= \|(I\otimes I - FG\otimes \Omega\Lambda)^{-1}\|_2
	= \|W_0\|_2.
	\]
	There are three cases to discuss in the following, and for whatever cases it is
	easy to see that $ \|W_0\|_1 = \|W_0\|_\infty=\|W_0e_1\|_1$
	and then $\|W_0\|_2\le\sqrt{\|W_0\|_1\|W_0\|_\infty}=\|W_0\|_1=\|W_0e_1\|_1$.
	\begin{enumerate}
		\item if $(\Lambda,\Omega)$ is of \typ{2}:
			\[
			(-\Omega\Lambda)^{i'-j'}=
			\begin{cases}
				(-1)^{i'-j'}, & i-j=i'-j',\\
				0, & i-j\ne i'-j'.
			\end{cases}
			\]
			Thus $\|W_1\|_2 \le \|W_0e_1\|_1 = \min\{n,n'\}$;
			and $\|W_2\|_2 \le \|W_0e_1 -e_1\|_1 = \min\{n,n'\}-1$.
		\item if $(\Lambda,\Omega)$ is of \typ{3}:
			\begin{equation}
				(-\Omega\Lambda)^{i'-j'}=
				\begin{cases}
					(-1)^{i'-j'}\lambda^{i'-j'-i+j}|\lambda|^{i-j}\tbinom{i'-j'}{i-j}, & i\ge j,\\
					0, & i<j,
				\end{cases}
				\quad \text{for $i'\ge j'$}.
				\label{eq:tmp:OmegaLambda-power-typ3}
			\end{equation}
			Thus
			\begin{align*}
				\|W_1\|_2 &\le \|W_0e_1\|_1
				\\&= 1+\sum_{i=1}^{n}\sum_{i'=2}^{n'}\left| (-1)^{i'-1}\lambda^{i'-i}|\lambda|^{i-1}\tbinom{i'-1}{i-1}\right|
				\\&= 1+\sum_{i=1}^{n}\sum_{i'=2}^{n'}|\lambda|^{i'-1}\tbinom{i'-1}{i-1}
				\\&\le 1+\sum_{i'=2}^{n'}|\lambda|^{i'-1}2^{i'-1}
				\\&= 1+\frac{2|\lambda|-(2|\lambda|)^{n'}}{1-2|\lambda|}
				\\&= 1+2|\lambda|\varphi(\frac{1}{2|\lambda|},n'-1),
			\end{align*}
			and
			\[
			\|W_2\|_2 \le \|W_0e_1 -e_1\|_1 = 2|\lambda|\varphi(\frac{1}{2|\lambda|},n'-1).
			\]
		\item if $(\Lambda,\Omega)$ is of \typ{4}:
			$ (-\Omega\Lambda)^{i'-j'}=\diag(R,\ol{R}) $
			and $R$ is of the form \eqref{eq:tmp:OmegaLambda-power-typ3}.
			Thus,
			\[
			\|W_1\|_2\le \max\{\|W_0e_1\|_1,\|\ol{W_0}e_1\|_1\} =  2|\lambda|\varphi(\frac{1}{2|\lambda|},n'-1)+1,
			\]
			and
			\[
			\|W_2\|_2 \le \max\{\|W_0e_1 -e_1\|_1,\|\ol{W_0}e_1 -e_1\|_1\} = 2|\lambda|\varphi(\frac{1}{2|\lambda|},n'-1).
			\]
	\end{enumerate}

%\end{proof}

%\clearpage
{\small
\bibliographystyle{plain}
\bibliography{\TeXHOME/strings,\TeXHOME/relpert}
}

\end{document}